\providecommand{\U}[1]{\protect\rule{.1in}{.1in}}
\newtheorem{theorem}{Theorem}
\newtheorem{corollary}{Corollary}
\newtheorem{lemma}{Lemma}
\newtheorem{remark}{Remark}
\newenvironment{proof}[1][Proof]{\textbf{#1.} }{\ \rule{1em}{1em}}
\begin{document}

\title{Inviscid dynamical structures near Couette flow}
\author{Zhiwu Lin and Chongchun Zeng\\School of Mathematics\\Georgia Institute of Technology\\Atlanta, GA 30332, USA}
\maketitle

\begin{abstract}
Consider inviscid fluids in a channel $\left\{  -1<y<1\right\}  $. For the
Couette flow $\vec{v}_{0}=\left(  y,0\right)  $, the vertical velocity of
solutions to the linearized Euler equation at $\vec{v}_{0}\,$\ decays in time.
At the nonlinear level, such inviscid damping has not been proved. First, we
show that in any (vorticity) $H^{s}\left(  s<\frac{3}{2}\right)
\ $neighborhood of Couette flow, there exist non-parallel steady flows with
arbitrary minimal horizontal period. This implies that nonlinear inviscid
damping is not true in any (vorticity) $H^{s}\left(  s<\frac{3}{2}\right)  $
neighborhood of Couette flow and for any horizontal period. Indeed, the long
time behavior in such neighborhoods are very rich, including nontrivial steady
flows, stable and unstable manifolds of nearby unstable shears. Second, in the
(vorticity) $H^{s}\left(  s>\frac{3}{2}\right)  $ neighborhood of Couette, we
show that there exist no non-parallel steadily travelling flows $\vec
{v}\left(  x-ct,y\right)  $, and no unstable shears. This suggests that the
long time dynamics in $H^{s}\left(  s>\frac{3}{2}\right)  $ neighborhoods of
Couette might be much simpler. Such contrasting dynamics in $H^{s}$ spaces
with the critical power $s=\frac{3}{2}\ $is a truly nonlinear phenomena, since
the linear inviscid damping near Couette is true for any initial vorticity in
$L^{2}.$

\end{abstract}

\section{Introduction}

Consider the incompressible inviscid fluid in a channel $\left\{  \left(
x,y\right)  \ |\ -1\leq y\leq1\right\}  $, satisfying the 2D Euler equation
\begin{equation}%
\begin{cases}
\partial_{t}u+u\partial_{x}u+v\partial_{y}u=-\partial_{x}P\\
\partial_{t}v+u\partial_{x}v+v\partial_{y}v=-\partial_{y}P
\end{cases}
\label{Euler}%
\end{equation}
with the incompressibility condition
\begin{equation}
\partial_{x}u+\partial_{y}v=0 \label{incompre}%
\end{equation}
and the boundary conditions
\begin{equation}
v=0\text{ on }\left\{  y=-1\right\}  \text{ and }\left\{  y=1\right\}  .
\label{Euler-bc}%
\end{equation}
Here, $\vec{u}=\left(  u,v\right)  $ is the fluid velocity and $P$ is the
pressure. Define the vorticity $\omega=u_{y}-v_{x}$, then $\omega$ satisfies
the equation%
\[
\omega_{t}+u\omega_{x}+v\omega_{y}=0.
\]
Any shear flow $\left(  U\left(  y\right)  ,0\right)  \ $is a steady solution
for (\ref{Euler}). The Couette flow $\vec{u}_{0}=\left(  y,0\right)  $ is
among the simplest laminar flows, however, it poses several long-standing
puzzles in hydrodynamics. First, for any Reynolds number $R>0,\ $the Couette
flow is also a steady state for Navier-Stokes equations%
\begin{equation}%
\begin{cases}
\partial_{t}u+u\partial_{x}u+v\partial_{y}u=-\partial_{x}P+\frac{1}{R}\Delta
u\\
\partial_{t}v+u\partial_{x}v+v\partial_{y}v=-\partial_{y}P+\frac{1}{R}\Delta v
\end{cases}
\label{NS}%
\end{equation}
with (\ref{Euler-bc}) and the boundary conditions%
\[
\left(  u,v\right)  =\left(  \pm1,0\right)  \text{ on }\left\{  y=\pm
1\right\}  .
\]
The so called Sommerfeld paradox (\cite{Som08}) is that Couette flow is
linearly stable for any $R>0$ (proved in \cite{Rom73}), but it becomes
turbulent when $R$ is large as revealed in experiments and numerical
simulations. We refer to (\cite{li-lin}) and the references therein for
attempts to resolve this paradox. In this paper, we are interested in another
mystery about Couette flow, namely, the inviscid damping. It is obvious that
Couette flow is nonlinearly stable in any $L^{p}$ norm of vorticity $\omega$,
since for Couette flow $\omega_{0}=1$ and thus the vorticity perturbation is
preserved along the perturbed flow trajectory. In 1907, Orr (\cite{orr-1907})
observed that for the linearized Euler equation around Couette$,\ $the
vertical velocity $v\left(  t\right)  $ tends to zero when $t$ goes to
infinity. We refer to Section 4 for a more detailed study on the linear
damping of Couette flow. It is\ unusual that such damping phenomena can occur
for a time reversible system such as the Euler equation.
%ZL-426
Moreover, the issue of inviscid damping also appears in the study of many
other stable flows (\cite{caglioti}, \cite{levy-et-70}, \cite{pillai-Gould94},
\cite{schecter-et al}), and is believed to plan important roles on explaining
the appearance of coherent structures in 2D turbulence. To be precise
mathematically, the problem of nonlinear inviscid damping near Couette flow is
to prove or disprove the following statement: When the initial velocity is
close enough to Couette in the sense that%
\[
\left\Vert \left(  u\left(  0\right)  ,v\left(  0\right)  \right)  -\left(
y,0\right)  \right\Vert _{\mathbf{X}}\ \text{is small enough}%
\]
in some function space $\mathbf{X}$, then
\[
\left\Vert v\left(  t\right)  \right\Vert _{L^{2}}\rightarrow0\ \text{when\ }%
t\rightarrow\infty,
\]
that is, $\left(  u\left(  t\right)  ,v\left(  t\right)  \right)  $ tends
asymptotically to a shear flow $\left(  U_{\infty}\left(  y\right)  ,0\right)
$ near the Couette flow. So far, nonlinear inviscid damping has not been
proved for Couette flow or any other stable Euler flows. Our first result
shows that the minimal regularity for such nonlinear damping to be true is
$H^{\frac{5}{2}}$, that is, the velocity space $\mathbf{X}$ must be at least
$H^{\frac{5}{2}}$.

\begin{theorem}
\label{thm-existence}Fixed any $T>0$ and $0\leq s<\frac{3}{2},\ $then for any
$\varepsilon>0$, there exists a steady solution $\left(  u_{\varepsilon
}\left(  x,y\right)  ,v_{\varepsilon}\left(  x,y\right)  \right)  $ to Euler
equation (\ref{Euler}) with (\ref{incompre})-(\ref{Euler-bc}) such that
$\left(  u_{\varepsilon}\left(  x,y\right)  ,v_{\varepsilon}\left(
x,y\right)  \right)  $ has minimal $x-$period $T,$
\[
\left\Vert \omega_{\varepsilon}-1\right\Vert _{H_{\left(  0,T\right)
\times\left(  -1,1\right)  }^{s}}<\varepsilon,\text{ \ where\ \ }%
\omega_{\varepsilon}=\partial_{y}u_{\varepsilon}-\partial_{x}v_{\varepsilon},
\]
and $v_{\varepsilon}\left(  x,y\right)  $ is not identically zero.
\end{theorem}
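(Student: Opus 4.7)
The plan is to construct $(u_\varepsilon, v_\varepsilon)$ as a non-parallel steady Euler flow by the stream-function/vorticity ansatz $\omega = F(\psi)$ combined with a bifurcation argument off a nearby shear. A smooth steady 2D Euler flow with $\nabla\psi$ non-degenerate satisfies $\omega = F(\psi)$ for some profile $F$, since stationary vorticity is constant on streamlines. The Couette flow corresponds to $\psi_0(y) = y^2/2$, $\omega_0 = 1$, with $F \equiv 1$ on the range $[0,1/2]$. I would look for $\psi = \tilde\psi(y) + \phi(x,y)$, where $\tilde\psi$ is a shear profile close to $y^2/2$, and $\phi$ is $T$-periodic in $x$, vanishing on $y = \pm 1$, with $(\tilde\psi, F)$ engineered so that the linearized stationary equation about $\tilde\psi$ admits a neutral mode at wavenumber $\alpha = 2\pi/T$.

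Concretely, linearizing $\Delta\psi = F(\psi)$ around $\tilde\psi$ (where $F(\tilde\psi(y)) = \tilde\psi''(y)$, hence $F'(\tilde\psi(y)) = \tilde U''(y)/\tilde U(y)$ with $\tilde U = \tilde\psi'$) and separating variables $\phi = \cos(\alpha x) g(y)$ yields the Rayleigh-type Sturm--Liouville problem
\[
g''(y) - \alpha^2 g(y) - \frac{\tilde U''(y)}{\tilde U(y)}\,g(y) = 0, \qquad g(\pm 1) = 0.
\]
For pure Couette ($\tilde U'' = 0$) this has no nontrivial Dirichlet solution, so a genuine shear perturbation is needed. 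I would select $\tilde U$ close to $y$ in the appropriate norm such that the coefficient $\tilde U''/\tilde U$ pushes a Sturm--Liouville eigenvalue exactly to zero at the prescribed $\alpha$; the freedom in choosing $\tilde U$ at low regularity makes this feasible for every $T > 0$.

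With this setup, Crandall--Rabinowitz (or equivalent Lyapunov--Schmidt reduction) would produce a one-parameter branch of nontrivial solutions $\phi_\tau = \tau\cos(\alpha x) g(y) + O(\tau^2)$ bifurcating from the shear $\tilde\psi$ at $\tau = 0$. For small $\tau$ the resulting flow is non-parallel ($v_\varepsilon = -\partial_x\phi_\tau \not\equiv 0$) and the minimal $x$-period is exactly $T$, the latter inherited from the leading-order mode and preserved along the branch by uniqueness. The total vorticity perturbation $\omega_\varepsilon - 1 = (\tilde U' - 1) + F'(\tilde\psi)\phi_\tau + O(\tau^2)$ is then controlled by the two independently small parameters: the shear deviation $\|\tilde U' - 1\|_{H^s}$ and the bifurcation amplitude $\tau$.

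The principal technical obstacle is the sharp regularity threshold $s < \tfrac{3}{2}$. Building a shear $\tilde U$ that (i) supports a Rayleigh neutral mode at the specified wavenumber, (ii) is arbitrarily close to $y$ in $H^s_y$, and (iii) avoids destroying the bifurcation argument through a critical-layer singularity at $y = 0$ (where Couette vanishes, making $\tilde U''/\tilde U$ potentially singular) is delicate and appears to \emph{require} working in a regularity class no better than $H^{3/2-}$---which is exactly the class compatible with the rigidity theorem (the second main result) ruling out such flows in $H^{s}$ for $s > \tfrac{3}{2}$. Thus executing the bifurcation in a function space adapted to this limited regularity, and rigorously verifying the minimality of the period along the branch, will be the main points that need care.
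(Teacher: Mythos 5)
Your strategy coincides with the paper's: bifurcate from a modified shear $(\tilde U(y),0)$ near Couette via the ansatz $\Delta\psi=f(\psi)$, reduce to the Sturm--Liouville problem with potential $\tilde U''/\tilde U$ and Dirichlet conditions, and apply Crandall--Rabinowitz at a simple neutral eigenvalue. However, the proposal has a genuine gap precisely at the step you defer as ``the main point that needs care'': you never construct $\tilde U$, and you give no argument that a shear supporting a negative eigenvalue of $-\frac{d^2}{dy^2}+\tilde U''/\tilde U$ can be found within $\varepsilon$ of Couette in $H^s$ for every $s<\frac32$. The assertion that ``the freedom in choosing $\tilde U$ at low regularity makes this feasible'' is exactly what must be proved; indeed Corollary \ref{cor-stable} shows it is \emph{false} for $s>\frac32$, so the threshold cannot be obtained by soft reasoning. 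The paper's mechanism is an explicit concentrating family $U_{\gamma,a}(y)=y+a\gamma^2\operatorname{erf}(y/\gamma)$, for which $Q_{\gamma,a}=U_{\gamma,a}''/U_{\gamma,a}\to-4a\,\delta_0$ as $\gamma\to0$; the limiting operator $-\frac{d^2}{dy^2}-4a\delta_0$ has a negative Dirichlet eigenvalue $-\beta_a^2$ with $2a=\beta_a\coth\beta_a$ whenever $a>\frac12$ (made rigorous by variational comparison of the quadratic forms, with an $O(\sqrt\gamma)$ error). The regularity threshold then comes from the exact scaling $\bigl\Vert\gamma e^{-(y/\gamma)^2}\bigr\Vert_{\dot H^s(\mathbf{R})}=C_s\gamma^{\frac32-s}$ applied to $U_{\gamma,a}'-1=\frac{2a\gamma}{\sqrt\pi}e^{-(y/\gamma)^2}$: the distance to Couette in $H^s$ tends to $0$ as $\gamma\to0$ if and only if $s<\frac32$. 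Without this (or an equivalent) quantitative construction, the proof does not establish the theorem at the stated critical exponent.

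Two further points. First, your suggestion that $\tilde U$ itself must live in a regularity class ``no better than $H^{3/2-}$'' is a misreading of where the threshold enters: the shears used are $C^\infty$, the bifurcation is carried out in smooth spaces ($H^3\to H^1$), and only the \emph{distance} $\Vert\tilde U'-1\Vert_{H^s}$ degenerates as the concentration scale shrinks. Second, you do not explain how to achieve minimal period exactly equal to the prescribed $T$: Crandall--Rabinowitz produces a branch whose period $T(\gamma,a;r)$ varies with the amplitude, so one must tune a parameter. The paper does this by choosing $a_1<a_2$ with $\beta_{a_1}<\frac{2\pi}{T}<\beta_{a_2}$ and using continuity of $T(\gamma,a;r)$ in $a$ to hit $T$ exactly via the intermediate value theorem; some such argument is needed and is absent from your sketch.
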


The above Theorem immediately implies that nonlinear inviscid damping is
\textit{not} true in any (vorticity)$\ H^{s}\ \left(  s<\frac{3}{2}\right)
\ $neighborhood, or equivalently in any (velocity)$\ H^{s}\ \left(  s<\frac
{5}{2}\right)  \ $neighborhood of Couette flow. As a corollary of the proof of
Theorem \ref{thm-existence}, we also get the following structural instability
result for Couette flow.

\begin{corollary}
\label{cor-insta}Fixed any $T>0$ and $0\leq s<\frac{3}{2},\ $then for any
$\varepsilon>0$, there exists a shear flow $\left(  U_{\varepsilon}\left(
y\right)  ,0\right)  $ such that $\left\Vert U_{\varepsilon}^{\prime}\left(
y\right)  -1\right\Vert _{H^{s}\left(  -1,1\right)  }<\varepsilon$ and
$\left(  U_{\varepsilon}\left(  y\right)  ,0\right)  $ is exponentially
unstable to perturbations of $x-$period $T$.
\end{corollary}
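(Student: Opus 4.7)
The plan is to read the unstable shear off the bifurcation picture underlying Theorem~\ref{thm-existence}. That theorem is expected to be proved by a Crandall--Rabinowitz style argument applied to a one-parameter family of shears $(U_{\varepsilon,\lambda}(y),0)$, all lying within the prescribed $H^s$-neighborhood of Couette, out of which a non-parallel branch bifurcates at a critical parameter $\lambda_*$. At $\lambda_*$ the Rayleigh equation
\[
(U_{\varepsilon,\lambda_*}-c_*)(\phi''-k^{2}\phi)-U_{\varepsilon,\lambda_*}''\,\phi=0,\quad\phi(\pm 1)=0,\quad k=\tfrac{2\pi}{T},
\]
admits a nontrivial solution with real $c_*$ lying in the range of $U_{\varepsilon,\lambda_*}$. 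To prove the corollary it then suffices to exhibit some $\lambda$ close to $\lambda_*$ at which this neutral mode has moved into the open upper half-plane, and to set $U_\varepsilon:=U_{\varepsilon,\lambda}$.

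The next step is to treat $(c_*,\phi_*)$ as a simple embedded eigenvalue of the $\lambda$-family of Rayleigh operators and to apply an implicit function theorem, in a weighted or Sobolev space that handles the critical layer at $y_{c_*}$, to produce a smooth curve $\lambda\mapsto c(\lambda)$ with $c(\lambda_*)=c_*$. Differentiating the Rayleigh equation in $\lambda$ and pairing with $\phi_*$ gives a closed-form expression for $c'(\lambda_*)$ in terms of $\partial_\lambda U_{\varepsilon,\lambda_*}$ and $\phi_*$. The crucial step is to verify that this derivative has nonzero imaginary part, so that $c(\lambda)$ leaves the real axis transversally as $\lambda$ crosses $\lambda_*$. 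Complex-conjugation symmetry of the Euler linearization then forces eigenvalues to come in conjugate pairs, so on one side of $\lambda_*$ one of them sits in the open upper half plane, producing exponential growth at rate $k\,|\mathrm{Im}\,c(\lambda)|>0$ on $T$-periodic perturbations.

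The main obstacle is precisely this transversality of the eigenvalue crossing: a priori $c(\lambda)$ could drift along the real axis or split into two real neutral modes, neither of which yields instability. The rescue should come from the very same non-degeneracy of $\phi_*$ that makes the Crandall--Rabinowitz bifurcation of Theorem~\ref{thm-existence} succeed in the first place: the existence of a genuine non-parallel bifurcating branch is incompatible with a purely real eigenvalue crossing and forces $c'(\lambda_*)\notin\mathbb{R}$. Granting this, the shear $U_\varepsilon$ inherits smallness in $H^s$ automatically from the enclosing family, and the corollary is complete.
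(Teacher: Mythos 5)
There is a genuine gap at the step you yourself flag as ``the main obstacle'': the transversality of the eigenvalue crossing. Your proposed rescue --- that the non-degeneracy making the Crandall--Rabinowitz bifurcation of Theorem~\ref{thm-existence} work is ``incompatible with a purely real eigenvalue crossing'' and therefore forces $c'(\lambda_*)\notin\mathbb{R}$ --- is a non sequitur, and false as a general principle. The bifurcation of cat's eyes comes from the kernel of the \emph{self-adjoint} steady operator $\mathcal{G}=k_0^2\partial_\xi^2+\partial_y^2-Q(y)$; its kernel element is exactly a real neutral Rayleigh mode with $c$ equal to the inflection value, and the existence of a branch of nearby non-parallel steady states is logically independent of whether that neutral mode perturbs into the upper half-plane. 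Indeed, in this very paper the shear $(U_{\gamma,a},0)$ supports the bifurcation at period $2\pi/\sqrt{-\lambda_{\gamma,a}}$ but is unstable only for periods \emph{greater} than this threshold and stable for smaller ones; the bifurcation itself cannot see which side is which. Deciding the direction requires a quantitative perturbation analysis of the singular Rayleigh equation at the critical layer (the sign of $\mathrm{Im}\,c$ is governed by an integral involving $U''/|U'|$ and $|\phi_*|^2$ at $y_c$), which your argument never supplies.

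The paper sidesteps this entirely by quoting it as a black box: Lemma~\ref{lemma-insta-shear} (from \cite{lin-siam}) states that for a monotone shear with a single inflection point, a negative eigenvalue $-\beta_{\gamma,a}^2$ of $\mathcal{L}_{\gamma,a}=-d^2/dy^2+U_{\gamma,a}''/U_{\gamma,a}$ yields exponential instability for every $x$-period exceeding $2\pi/\beta_{\gamma,a}$. The proof of the corollary is then purely constructive: take the explicit family $U_{\gamma,a}(y)=y+a\gamma^2\operatorname{erf}(y/\gamma)$, use Lemma~\ref{lemma-eigenvalue} to choose $a>\frac12$ with $\beta_a\coth^{-1}$-asymptotics giving $2\pi/\beta_{\gamma,a}<T$, and use the explicit computation $\left\Vert U_{\gamma,a}'-1\right\Vert_{\dot H^s}=C_s\gamma^{3/2-s}\to 0$ for $s<\frac32$ to land inside the $\varepsilon$-neighborhood. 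If you want to pursue your route, you must either import the critical-layer transversality computation from \cite{lin-siam} (at which point you are reproducing the paper's argument) or carry it out yourself; the bifurcation structure alone will not do it.
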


The shear flow $\left(  U_{\varepsilon}\left(  y\right)  ,0\right)  $ is
unstable in the sense that unstable eigenvalues exist for the linearized
problem in the domain $\Omega_{T}=S_{T}\times\left(  -1,1\right)  $,\ where
$S_{T}$ is the $T-$periodic circle. By our results in
\cite{lin-zeng-euler-invar}, there exist stable and unstable manifolds near
$\left(  U_{\varepsilon}\left(  y\right)  ,0\right)  $ for the Euler equation
(\ref{Euler}) in $\Omega_{T}$. Therefore, Theorem \ref{thm-existence} and
Corollary \ref{cor-insta} imply that the long time dynamics in the (vorticity)
$H^{s}\ \left(  s<\frac{3}{2}\right)  $ neighborhood of Couette flow is very
rich, including nontrivial steady flows, stable and unstable manifolds of
nearby unstable shear flows.

Our next theorem shows that there exist \textit{no} nontrivial steadily
travelling flows in the (vorticity) $H^{s}\ \left(  s>\frac{3}{2}\right)  $
neighborhoods of Couette flow.

\begin{theorem}
\label{thm-non-existence}Fixed any $T>0,$ $s>\frac{3}{2},\ $there exists
$\varepsilon_{0}>0$ such that any travelling solution $\left(  u\left(
x-cy,y\right)  ,v\left(  x-cy,y\right)  \right)  $ $\left(  c\in
\mathbf{R}\right)  \ $to Euler equation (\ref{Euler})-(\ref{Euler-bc}) with
$x-$period $T$ and satisfying that
\[
\left\Vert \omega-1\right\Vert _{H_{\left(  0,T\right)  \times\left(
-1,1\right)  }^{s}}<\varepsilon_{0},
\]
must have $v\left(  x,y\right)  \equiv0$, that is, $\left(  u,v\right)  $ is
necessarily a shear flow.
\end{theorem}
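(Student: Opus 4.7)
The plan is to pass to the moving frame $X = x - ct$, in which the traveling wave becomes a \emph{steady} Euler flow with velocity $(u-c, v)$ on $\Omega_T = S_T \times (-1,1)$, and to exploit the rigidity of such steady flows near Couette. I introduce the stream function $\psi$ with $\psi_y = u - c$ and $\psi_X = -v$, normalized so that $\psi_0 = \tfrac{1}{2}(y-c)^2$ corresponds to Couette. The assumption $\Vert\omega - 1\Vert_{H^s} < \varepsilon_0$ combined with elliptic regularity and the 2D Sobolev embedding $H^{s+2}\hookrightarrow C^2$ (valid since $s > 3/2$) provides a $C^2$-small perturbation $\psi - \psi_0$. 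The steady Euler equation reduces to the Jacobian identity $\{\psi, \omega\} = 0$, i.e.\ $\omega$ is constant on each connected streamline of $\psi$.

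\textbf{Case 1: $c \notin [-1,1]$.} Since $\partial_y \psi_0 = y - c$ is bounded away from zero on $[-1,1]$, $C^1$-smallness rules out stagnation points and $\psi$ is globally strictly monotone in $y$. Because $v = -\psi_X$ vanishes on $y = \pm 1$, the boundary traces $\psi(X, \pm 1)$ are constants; monotonicity then yields a globally well-defined function $F$ with $\omega = F(\psi)$ on $\Omega_T$. Let $\psi^\ast(y)$ be the unique $X$-independent solution of $(\psi^\ast)'' = F(\psi^\ast)$ with the same boundary values, close to $\psi_0$. The difference $\phi = \psi - \psi^\ast$ satisfies $\Delta \phi = F(\psi) - F(\psi^\ast)$ with zero Dirichlet data. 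Fourier-expanding $\phi = \sum_k \phi_k(y)e^{2\pi i k X/T}$ and linearizing, for $\varepsilon_0$ small each nonzero mode solves a Sturm--Liouville problem whose operator $-\partial_y^2 + (2\pi k/T)^2 + F'(\psi^\ast) + R_k$ is positive-definite on $H^1_0(-1,1)$ (since $|F'|$ is small and $(2\pi k/T)^2 \geq (2\pi/T)^2$), forcing $\phi_k\equiv 0$. The $k = 0$ mode vanishes by uniqueness of $\psi^\ast$, so $\psi$ is $X$-independent and $v\equiv 0$.

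\textbf{Case 2: $c \in [-1,1]$.} Here $\psi_0$ has a critical curve at $y = c$, and $\psi$ has a nearby critical curve $\gamma\subset\Omega_T$ splitting the channel into $\Omega_T^+, \Omega_T^-$. On each half, $\psi$ remains monotone in $y$, and $\{\psi,\omega\} = 0$ yields $\omega = F_\pm(\psi)$ on $\Omega_T^\pm$ with $F_\pm$ defined on the respective $\psi$-ranges. To close the argument I need $F_+ = F_-$ near the stagnation value $\psi_{\min}$, after which the two relations patch into a single $F$ and the Sturm--Liouville rigidity of Case 1 applies (coercivity survives the degeneracy at $y = c$ because $|F'|$ remains small and $(2\pi k/T)^2 > 0$). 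To obtain this matching I would parametrize a two-sided neighborhood of $\gamma$ by $(X, \psi)$, express $\omega$ piecewise as $F_\pm(\psi)$, and show via a change-of-variables estimate that the $H^s$-seminorm of $\omega$ dominates a Besov-type seminorm of $F_+ - F_-$ at $\psi_{\min}$ which blows up unless $F_+$ and $F_-$ agree to sufficient order. Crucially, $s > 3/2$ is exactly the threshold that forces this matching, mirroring Theorem~\ref{thm-existence} where $s < 3/2$ permits genuine mismatches $F_+ \neq F_-$ (i.e.\ cat's-eye-type steady states).

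\textbf{Main obstacle.} The principal difficulty lies entirely in Case 2: controlling the geometry of the critical curve $\gamma$ (verifying it is smooth and graph-like over the $X$-axis), and establishing the quantitative incompatibility between $\omega\in H^s$ with $s > 3/2$ and any mismatch $F_+\neq F_-$ across $\gamma$. Once a global $F$ is produced, the Sturm--Liouville coercivity argument of Case 1 extends essentially verbatim; the entire subtlety is the sharp characterization of how the threshold $s = 3/2$ separates admissible rough configurations from the smooth regime where rigidity holds.
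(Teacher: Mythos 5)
Your route (moving frame, stream function, level-set relation $\omega=F(\psi)$, Sturm--Liouville rigidity) is genuinely different from the paper's, but Case 2 --- which is the essential case, since for $\omega$ close to $1$ in $H^{s}$ one has $u$ close to $y$ and any wave speed $c$ of interest lies in or near $[-1,1]$ --- contains a real gap, and I believe the mechanism you propose there is not the right one. First, the matching $F_{+}=F_{-}$ across the critical curve cannot be the dividing line at $s=\tfrac{3}{2}$: the cat's-eye flows constructed in Section 2 (Lemma \ref{lemma-bifurcation}) for $s<\tfrac{3}{2}$ all solve $\Delta\psi=f(\psi)$ with a \emph{single} globally defined $C^{2}$ function $f$, so nontrivial steady states exist even when your matching condition holds. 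What actually controls the dichotomy is the size of $F'(\psi)=\partial_{y}\omega/(u-c)$ near the critical layer: in Theorem \ref{thm-existence} the profiles $U_{\gamma,a}$ are built precisely so that $Q=U''/U$ concentrates like $-4a\delta(0)$ while $\|U'-1\|_{H^{s}}\sim\gamma^{3/2-s}\to0$ only for $s<\tfrac{3}{2}$. Correspondingly, your assertion in Case 2 that ``coercivity survives the degeneracy at $y=c$ because $|F'|$ remains small'' is exactly the unproven heart of the theorem: $F'$ is a $0/0$ quotient at the stagnation set, and bounding it requires showing that $\partial_{y}\omega$ vanishes there and then invoking a Hardy-type inequality for $\partial_{y}\omega\in H^{s-1}$ with $s-1>\tfrac{1}{2}$. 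This is where $s>\tfrac{3}{2}$ enters, and it is missing from your argument.

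For comparison, the paper avoids the level-set geometry entirely. It argues by contradiction with a sequence $\varepsilon_{n}\to0$, normalizes $u_{n}$, uses monotonicity of $u_{n}(x,\cdot)$ to locate at most one critical point $y_{n}(x)$ per vertical line, and starts from the identity
\begin{equation*}
\int_{0}^{T}\!\!\int_{-1}^{1}|\nabla v_{n}|^{2}\,dy\,dx=\int v_{n}\,\partial_{x}\omega_{n}=-\int v_{n}\,\frac{v_{n}\,\partial_{y}\omega_{n}}{u_{n}-c_{n}}\,dy\,dx .
\end{equation*}
It then splits $(0,T)$ according to whether $c_{n}$ is outside or inside the range of $u_{n}(x,\cdot)$, and estimates each piece by H\"older together with Lemma \ref{lemma-Hardy} applied either to $v_{n}/(y\pm1)$ or to $\partial_{y}\omega_{n}/(y-y_{n}(x))$ (the latter using $\partial_{y}\omega_{n}(x,y_{n}(x))=0$ from the vorticity equation), obtaining $\|\nabla v_{n}\|_{L^{2}}^{2}\leq C\varepsilon_{n}\|\nabla v_{n}\|_{L^{2}}^{2}$ and hence $v_{n}\equiv0$. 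If you want to salvage your scheme, you should drop the $F_{+}$ versus $F_{-}$ matching and instead prove an $L^{p}$ bound on $\partial_{y}\omega/(u-c)$ near the critical set via such a Hardy inequality; that is the step your proposal currently lacks.
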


By the proof of Theorem \ref{thm-non-existence}, we also have the following

\begin{corollary}
\label{cor-stable}Fixed any $T>0$ and $s>\frac{3}{2},\ $there exists
$\varepsilon_{0}>0$ such that any shear flow $\left(  U\left(  y\right)
,0\right)  $ satisfying
\[
\left\Vert U^{\prime}\left(  y\right)  -1\right\Vert _{H^{s}\left(
-1,1\right)  }\leq\varepsilon_{0},
\]
is linearly stable to perturbations of $x-$period $T$.
\end{corollary}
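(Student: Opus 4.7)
The plan is to show that the Rayleigh stability equation for the shear $(U,0)$ admits no eigenvalue in the open upper half-plane. For each non-zero mode $e^{i\alpha_k x}$, $\alpha_k = 2\pi k/T$, such an eigenvalue is a $c\in\mathbb{C}$ with $\operatorname{Im}c>0$ for which
\[
(U-c)(\phi'' - \alpha_k^2 \phi) - U''\phi = 0, \qquad \phi(\pm 1)=0,
\]
has a nontrivial $H^1_0(-1,1)$ solution $\phi$. By the Sobolev embedding $H^{s-1}\hookrightarrow C^0$ (valid for $s>3/2$), the hypothesis $\|U'-1\|_{H^s}\le\varepsilon_0$ yields $\|U''\|_{L^\infty}\le C\varepsilon_0$ and strict monotonicity of $U$ for $\varepsilon_0$ small; Howard's semicircle theorem further confines any unstable $c$ to a fixed bounded region of the upper half-plane.

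The proof splits into an operator-norm range and a thin-strip range. Rewrite the equation as $L_{\alpha_k}\phi = (U''/(U-c))\phi$ with $L_{\alpha_k} = -\partial_y^2 + \alpha_k^2$ on $H^1_0$, whose smallest eigenvalue is at least $\alpha_k^2 + (\pi/2)^2$. Since $\|U''/(U-c)\|_{L^\infty} \le \|U''\|_{L^\infty}/c_i \le C\varepsilon_0/c_i$, no eigenvalue can exist whenever $c_i \ge C_1\varepsilon_0$ for a suitable $C_1$. It remains to rule out eigenvalues with $0 < c_i \le C_1\varepsilon_0$.

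To handle this thin strip, I would use the homotopy $U_\tau(y) = (1-\tau)y + \tau U(y)$, $\tau\in[0,1]$, noting that each $U_\tau$ lies in the $\varepsilon_0$-neighborhood of Couette. At $\tau=0$ the Section~4 analysis of Couette shows no unstable eigenvalue exists. Assuming an unstable eigenvalue $c_*$ at $\tau=1$, Kato analytic perturbation theory, together with the uniform semicircle bound, produces a continuous branch $c_*(\tau)$. By the previous paragraph this branch stays in the thin strip, and by Howard's theorem it remains in a compact region. The branch also cannot touch the real axis: a neutral mode with real $c_0\in[\min U_\tau,\max U_\tau]$ for some $U_\tau$ would, through the Crandall--Rabinowitz bifurcation employed in the proof of Theorem~\ref{thm-non-existence}, emanate a family of non-trivial travelling-wave solutions of the Euler equation with vorticities in an $H^s$-neighborhood of $1$, contradicting that theorem. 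Therefore the branch $c_*(\tau)$ persists all the way to $\tau=0$, contradicting the stability of Couette.

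The main obstacle is the Kato-type tracking of the eigenvalue through the thin strip, where it runs close to the essential spectrum of the Rayleigh operator (the real interval $[\min U_\tau,\max U_\tau]$). The uniform bound $\|U''\|_{L^\infty}\le C\varepsilon_0$ is exactly what supplies the off-axis resolvent estimates needed for this tracking, by the same mechanism that excludes neutral modes in the proof of Theorem~\ref{thm-non-existence}; verifying the transversality hypothesis at the would-be neutral mode is standard since $U_\tau$ is close to Couette and the candidate neutral eigenfunction is simple.
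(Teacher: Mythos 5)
Your first regime ($c_i\ge C_{1}\varepsilon_{0}$) is fine, and your overall strategy --- continuation of unstable eigenvalues together with exclusion of neutral limiting modes --- is indeed the standard route; it is essentially how the stability criterion the paper quotes (Lemma \ref{lemma-monotone}, from \cite{lin-comt}) is established. But the thin strip $0<c_{i}\le C_{1}\varepsilon_{0}$ is where all of the difficulty lives, and neither device you offer closes it. The exclusion of a neutral mode via ``Crandall--Rabinowitz at the neutral mode produces traveling waves contradicting Theorem \ref{thm-non-existence}'' is not justified: a neutral eigenvalue does not by itself generate a bifurcating family of traveling waves. The construction in Lemma \ref{lemma-bifurcation} requires the wave speed to be an inflection value, the vorticity to be written as a $C^{2}$ function of the stream function (which uses the oddness and monotonicity of the specific profiles there), simplicity of the kernel and a transversality condition --- none of which you verify for a general $U$ in the $H^{s}$-neighborhood. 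Theorem \ref{thm-non-existence} is proved by a variational identity and does not supply the ``mechanism'' you appeal to for ruling out neutral modes of the linear Rayleigh problem.

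More fundamentally, your only quantitative input is $\left\Vert U^{\prime\prime}\right\Vert _{L^{\infty}}\le C\varepsilon_{0}$, and this degenerates exactly where you need it: $\left\Vert U^{\prime\prime}/(U-c)\right\Vert _{L^{\infty}}\le C\varepsilon_{0}/c_{i}\rightarrow\infty$ as $c_{i}\rightarrow0$, so there is no uniform resolvent bound with which to track the Kato branch down to the continuous spectrum, and at $c_{i}=0$ the potential $U^{\prime\prime}/(U-U_{s}^{i})$ is not even controlled in any $L^{p}$, $p\ge1$, by an $L^{\infty}$ bound on $U^{\prime\prime}$ alone. The missing ingredient --- and the one the paper uses --- is the Hardy-type inequality of Lemma \ref{lemma-Hardy}: since $U^{\prime\prime}\in H^{s-1}$ with $s-1>\frac{1}{2}$ and $U^{\prime\prime}(y^{i})=0$ at the inflection point, one gets $U^{\prime\prime}/(y-y^{i})\in L^{p_{1}}$ for some $p_{1}>1$ with norm $\le C\varepsilon_{0}$, whence the neutral-mode operator $L_{i}=-d^{2}/dy^{2}+U^{\prime\prime}/(U-U_{s}^{i})$ satisfies $(L_{i}u,u)\ge(1-C\varepsilon_{0})\left\Vert u^{\prime}\right\Vert _{L^{2}}^{2}>0$, and stability then follows directly from Lemma \ref{lemma-monotone}. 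This is precisely where the threshold $s>\frac{3}{2}$ enters, and it is absent from your argument.
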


Theorem \ref{thm-non-existence} and Corollary \ref{cor-stable} suggests that
in the (vorticity) $H^{s}\left(  s>\frac{3}{2}\right)  \ $neighborhoods of
Couette flow, the long time dynamical behavior of Euler flows might be much
simpler. Particularly, the only steady structures in any reference frame are
nearby stable shear flows. A necessary condition for nonlinear inviscid
damping in any space is that there exist no nontrivial invariant structures
(time-periodic, quasi-periodic solutions etc.) near Couette flow in this
space. Theorem \ref{thm-linear} is a first step in this direction.

In Theorem \ref{thm-linear} in Section 4, we show that the linear decay holds
true for any initial vorticity in $L^{2}$ and the optimal decay rate is
already achieved for initial vorticity in $H^{1}$ (see Remark
\ref{rmk-decay rate}). This indicates that the contrasting dynamics in
$H^{s}\ $neighborhoods of Couette with $s<\frac{3}{2}$ or $s>\frac{3}{2}$ is a
truly nonlinear phenomena and it can not be traced back to the linear level.

A similar phenomena of collisionless damping for electron plasmas was
discovered at the linear level by Landau (\cite{landau}) in 1946. In the
physical literature, the collisionless damping had been often (\cite{ishenko},
\cite{pillai-Gould94}, \cite{levy-et-70}) compared with the inviscid damping
problem. In \cite{lin-zeng-VP-damping}, we obtained similar results for the
nonlinear Landau damping problem. Moreover, in the case of collisionless
plasmas we are able to prove a stronger result that $H^{\frac{3}{2}}$ is the
critical regularity for the existence of \textit{any} nontrivial invariant
structure near a stable homogeneous state.

This paper is organized as follows. In Section 2, we construct nontrivial
steady flows near Couette flow in (vorticity) $H^{s}\ \left(  s<\frac{3}%
{2}\right)  $ neighborhood and for any minimal $x-$period. In Section 3, the
non-existence of nontrivial travelling flows is proved in (vorticity)
$H^{s}\ \left(  s>\frac{3}{2}\right)  \ $neighborhood. Section 4 is to study
the linear damping problem in Sobolev spaces. Throughout this paper, we use
$C$ to denote a generic constant in the estimates and only indicate the
dependence of $C$ when it matters.

\section{Existence of Cat's-eyes in H$^{s}\ \left(  s<\frac{3}{2}\right)  $}

In this Section, we construct steady flows of Kelvin's cat's eyes structure
near Couette flow in the (vorticity) $H^{s}\ \left(  s<\frac{3}{2}\right)  $
space. Our strategy is to construct cat's eyes flows by bifurcation at
modified shear flows near Couette. We split the proof into several steps.

%CCZ

\begin{lemma}
\label{lemma-bifurcation}Assume $U\left(  y\right)  \in C^{5}\left[
-1,1\right]  ,\ $is odd, monotone in $\left[  -1,1\right]  $, and $U^{\prime
}(0)>0$. Let $Q\left(  y\right)  =\frac{U^{\prime\prime}\left(  y\right)
}{U\left(  y\right)  }$ and define the operator
\[
\mathcal{L}:=-\frac{d^{2}}{dy^{2}}+Q\left(  y\right)  ,\ H^{2}\left(
-1,1\right)  \rightarrow L^{2}\left(  -1,1\right)  ,
\]
with zero Dirichlet conditions at $\left\{  y=\pm1\right\}  $. If
$\mathcal{L}$ has a negative eigenvalue $-k_{0}^{2}$, then $\exists$
$\varepsilon_{0}>0$, such that for each $0<\varepsilon<\varepsilon_{0}$, there
exist a steady solution $\left(  u_{\varepsilon}\left(  x,y\right)
,v_{\varepsilon}\left(  x,y\right)  \right)  $ to Euler equations
(\ref{Euler})-(\ref{Euler-bc}) which has minimal period $T_{\varepsilon}$ in
$x$,
\[
\left\Vert \omega_{\varepsilon}\left(  x,y\right)  -U^{\prime}\left(
y\right)  \right\Vert _{H^{2}\left(  0,T_{\varepsilon}\right)  \times\left(
-1,1\right)  } = \varepsilon,
\]
and the streamlines of this steady flow near $y=0$ have cat's eyes structure,
with a leading order expression given by (\ref{cats-eye}). When $\varepsilon
\rightarrow0$, $T_{\varepsilon}\rightarrow\frac{2\pi}{k_{0}}$.
\end{lemma}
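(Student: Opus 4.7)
The plan is to write the steady Euler equation as a semilinear elliptic PDE for the stream function and apply a Crandall--Rabinowitz bifurcation at the base shear, with the horizontal wave number as the bifurcation parameter. Set $\psi_0(y) = \int_0^y U(s)\,ds$, so that the base shear has vorticity $\omega_0(y) = U'(y)$ and stream function $\psi_0$. Although $\psi_0$ is not monotone on $[-1,1]$, the symmetries of $U$ (odd, $U'$ even) allow $\omega_0$ to be expressed as a smooth function $F$ of $\psi_0$, which I then extend smoothly past the range of $\psi_0$. Writing the candidate steady stream function as $\Psi = \psi_0 + \phi$ with $\phi$ $T$-periodic in $x$ and $\phi|_{y=\pm 1} = 0$, the equation $\Delta \Psi = F(\Psi)$ becomes
\begin{equation*}
\Delta\phi - Q(y)\phi = N(\phi) := F(\psi_0+\phi) - F(\psi_0) - Q(y)\phi,
\end{equation*}
where $Q(y) = F'(\psi_0(y)) = U''(y)/U(y)$, matching the assumption, and $N(\phi) = O(\phi^2)$.

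Next, I would rescale $\xi = kx$ to fix the domain as $[0,2\pi]\times[-1,1]$, so the linearized operator is $L_k = k^2\partial_\xi^2 + \partial_y^2 - Q(y)$. Since $\mathcal{L}$ is a regular Sturm--Liouville operator on $[-1,1]$ with Dirichlet data, its eigenvalue $-k_0^2$ is simple, with eigenfunction $\phi_{k_0}(y)$. Fourier-decomposing in $\xi$ shows that the mode-$n$ block of $L_{k_0}$ is $-(\mathcal{L} + n^2 k_0^2)$, so only the modes $n = \pm 1$ are singular. Restricting to functions even in $\xi$ (justified by translation invariance in $x$), the kernel of $L_{k_0}$ is one-dimensional, spanned by $\phi_{k_0}(y)\cos\xi$. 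The Crandall--Rabinowitz transversality then follows from $\partial_k L_k[\phi_{k_0}(y)\cos\xi]\big|_{k=k_0} = -2k_0\,\phi_{k_0}(y)\cos\xi$, which is a nonzero multiple of the kernel vector and so not in $\mathrm{Range}(L_{k_0})$ (the orthogonal complement of the kernel by self-adjointness on the even-in-$\xi$ subspace). Crandall--Rabinowitz then yields a smooth branch $(k_\delta,\phi_\delta)$ of nontrivial solutions with $\phi_\delta = \delta\,\phi_{k_0}(y)\cos\xi + O(\delta^2)$ in $H^2$ and $k_\delta \to k_0$ as $\delta\to 0$.

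To match the stated normalization, I would reparametrize the branch by $\|\omega - U'\|_{H^2((0,T_\delta)\times(-1,1))} = \|\Delta\phi_\delta\|_{H^2} = \varepsilon$; this is possible because the leading term is a nonzero multiple of $\delta$ in that norm, so the map $\delta \mapsto \varepsilon$ is a local diffeomorphism. Minimality of $T_\varepsilon = 2\pi/k_\varepsilon$ follows since the single Fourier mode $\cos\xi$ is present at leading order, ruling out any smaller period for $\varepsilon$ small. Finally, the cat's-eyes structure is read off near $y = 0$ from the expansion
\begin{equation*}
\Psi_\varepsilon(x,y) \approx \tfrac12 U'(0)\, y^2 + \varepsilon\,\phi_{k_0}(0)\cos(k_\varepsilon x),
\end{equation*}
whose level sets exhibit the classical Kelvin pattern, giving the leading-order form (\ref{cats-eye}); the needed fact $\phi_{k_0}(0)\neq 0$ comes from $Q$ being even (so eigenfunctions have definite parity), combined with a genericity/Sturm-oscillation argument.

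The main obstacle I anticipate is carefully checking the hypotheses of the abstract bifurcation theorem in the correct function-space setting: ensuring that the smooth extension of $F$ beyond the range of $\psi_0$ makes $N$ a $C^2$ map on $H^2$ (which is where the $C^5$ regularity of $U$ gets used, to control the composition $F\circ(\psi_0+\phi)$ and its derivatives), establishing simplicity of $-k_0^2$ via Sturm--Liouville theory, and tracking the $H^2$-norm equivalence under the rescaling $\xi = kx$ so that the normalization $=\varepsilon$ transfers back to the original $(x,y)$ domain. The remaining ingredients---transversality, the leading-order expansion, and minimality of the period---are comparatively routine once the Crandall--Rabinowitz framework is in place.
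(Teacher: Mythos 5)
Your overall architecture is the same as the paper's: define $f$ with $\omega_0=f(\psi_0)$ by exploiting that $\psi_0$ and $Q$ are even functions of $y$ with $\psi_0'(y)/y=U(y)/y>0$, reduce to the semilinear problem $\Delta\psi=f(\psi)$, rescale $\xi=\alpha x$, restrict to the even-in-$\xi$ subspace, and apply Crandall--Rabinowitz with the wavenumber as bifurcation parameter; the transversality computation and the normalization/minimal-period/cat's-eyes endgame are also as in the paper.

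There is, however, one genuine gap: your assertion that ``only the modes $n=\pm1$ are singular,'' i.e.\ that $\ker L_{k_0}$ is one-dimensional, does not follow from Sturm--Liouville simplicity of $-k_0^2$ as an eigenvalue of $\mathcal{L}$. The mode-$n$ block of $L_{k_0}$ is singular precisely when $-n^2k_0^2\in\sigma(\mathcal{L})$, so you must rule out $-4k_0^2,-9k_0^2,\dots$ (and $0$) as eigenvalues of $\mathcal{L}$; the lemma's hypothesis only asserts the \emph{existence} of one negative eigenvalue, so if $\mathcal{L}$ happened to have another negative eigenvalue equal to $-n^2k_0^2$ for some $n\ge2$, the kernel would be two-dimensional and Crandall--Rabinowitz would not apply. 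The paper closes this by invoking a nontrivial spectral fact, proved in the Appendix of \cite{li-lin} for potentials of the special form $Q=U''/U$ with $U$ odd and monotone: $-k_0^2$ is the \emph{only} negative eigenvalue of $\mathcal{L}$. The same fact makes $-k_0^2$ the ground state, so its eigenfunction $\phi_0$ can be taken positive; this is what gives $\phi_0(0)>0$ and hence the cat's-eyes structure, replacing your vaguer ``genericity/Sturm-oscillation'' appeal (an even eigenfunction of an excited state could still vanish at $y=0$ only if it were odd, but you need positivity, not just parity, and that comes from being the lowest eigenvalue). With this spectral input supplied, the rest of your argument goes through as in the paper.
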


\begin{proof}
The proof is a slight modification of that in \cite{li-lin}. Let $\psi
_{0}\left(  y\right)  $ to be a stream function associated with the shear
$\left(  U\left(  y\right)  ,0\right)  $, i..e., $\psi_{0}^{\prime}\left(
y\right)  =U\left(  y\right)  $. Since $\psi_{0}\left(  y\right)  ,Q\left(
y\right)  $ are even in $\left[  -1,1\right]  $, we let $\psi_{0}\left(
y\right)  =G\left(  \frac{1}{2}y^{2}\right)  $ and $Q\left(  y\right)
=H\left(  \frac{1}{2}y^{2}\right)  $. Then
\[
G^{\prime}\left(  \frac{1}{2}y^{2}\right)  =\frac{\psi_{0}^{\prime}\left(
y\right)  }{y}=\frac{U\left(  y\right)  }{y}>0\text{, when }y\in\left[
-1,1\right]  ,
\]
and $G,\ H\in C^{1}$ because $U\left(  y\right)  \in C^{5}$. So we can define
a
%CCZ
function $f_{0}\in C^{2}\left[  \min\psi_{0},\max\psi_{0}\right]  $ such that
\[
f_{0}^{\prime}=H\circ G^{-1}\text{ and }f_{0}\left(  \psi_{0}\left(  0\right)
\right)  =\psi_{0}^{\prime\prime}\left(  0\right)  .
\]
Then we extend $f_{0}$ to $f\in C_{0}^{2}\left(  \mathbf{R}\right)  $ such
that $f=f_{0}$ in $\left[  \min\psi_{0},\max\psi_{0}\right]  $. By our
construction,
\begin{equation}
f^{\prime}(\psi_{0}\left(  y\right)  )=Q(y),\ \ \ \text{for }y\in\left[
-1,1\right]  \text{,} \label{eqn-f_0}%
\end{equation}
which implies that
\[
f^{\prime}(\psi_{0}\left(  y\right)  )\psi_{0}^{\prime}\left(  y\right)
=U^{\prime\prime}\left(  y\right)  =\psi_{0}^{\prime\prime\prime}\left(
y\right)  ,
\]
and an integration of above yields
\begin{equation}
f\left(  \psi_{0}\left(  y\right)  \right)  =\psi_{0}^{\prime\prime}\left(
y\right)  ,\ \text{for }y\in\left[  -1,1\right]  . \label{eqn-f-psi-0}%
\end{equation}
We construct steady flows near $\left(  U\left(  y\right)  ,0\right)  $ by
solving the elliptic equation
\[
\Delta\psi=f\left(  \psi\right)  ,
\]
where $\psi\left(  x,y\right)  $ is the stream function and $\left(
u,v\right)  =\left(  \psi_{y},-\psi_{x}\right)  $ is the steady velocity.
\ Let $\xi=\alpha x,\ \psi\left(  x,y\right)  =\tilde{\psi}\left(
\xi,y\right)  ,$ where $\tilde{\psi}\left(  \xi,y\right)  $ is $2\pi-$periodic
in $\xi.$ We use $\alpha^{2}$ as the bifurcation parameter. The equation for
$\tilde{\psi}\left(  \xi,y\right)  $ becomes
\begin{equation}
\alpha^{2}\frac{\partial^{2}\tilde{\psi}}{\partial\xi^{2}}+\frac{\partial
^{2}\tilde{\psi}}{\partial y^{2}}-f(\tilde{\psi})=0, \label{eqn-psi-tilde}%
\end{equation}
with the boundary conditions that $\tilde{\psi}$ takes constant values on
$\left\{  y=\pm1\right\}  $. Define the perturbation of the stream function
\[
\phi\left(  \xi,y\right)  =\tilde{\psi}\left(  \xi,y\right)  -\psi_{0}\left(
y\right)  .
\]
Then by using (\ref{eqn-f-psi-0}), we reduce the equation (\ref{eqn-psi-tilde}%
) to
\begin{equation}
\alpha^{2}\frac{\partial^{2}\phi}{\partial\xi^{2}}+\frac{\partial^{2}\phi
}{\partial y^{2}}-\left(  f(\phi+\psi_{0}\left(  y\right)  )-f\left(  \psi
_{0}\left(  y\right)  \right)  \right)  =0. \label{eqn-phi-traveling}%
\end{equation}
Define the spaces%
\[
B=\left\{  \phi(\xi,y)\in H^{3}([0,2\pi]\times\lbrack-1,1]),\text{ }\phi
(\xi,-1)=\phi(\xi,1)=0,\ 2\pi-\text{periodic and even in }\xi\right\}
\]
and
\[
D=\left\{  \phi(\xi,y)\in H^{1}([0,2\pi]\times\lbrack-1,1]),\text{ }%
2\pi-\text{periodic and even in }\xi\right\}  .
\]
Consider the mapping
\[
F(\phi,\alpha^{2})\ :B\times\mathbb{R}^{+}\mapsto D
\]
defined by
\[
F(\phi,\alpha^{2})=\alpha^{2}\frac{\partial^{2}\phi}{\partial\xi^{2}}%
+\frac{\partial^{2}\phi}{\partial y^{2}}-\left(  f(\phi+\psi_{0}\left(
y\right)  )-f\left(  \psi_{0}\left(  y\right)  \right)  \right)  .
\]
We study the bifurcation near the trivial solution $\phi=0$ of the equation
$F(\phi,\alpha^{2})=0$ in $B$, whose solutions give steady flows with
$x-$period $\frac{2\pi}{\alpha}$.\ The linearized operator of $F$
around$\ \left(  0,k_{0}^{2}\right)  $ has the form
\begin{align*}
\mathcal{G}  &  :=F_{\psi}(0,k_{0}^{2})=k_{0}^{2}\frac{\partial^{2}}%
{\partial\xi^{2}}+\frac{\partial^{2}}{\partial y}-f^{\prime}(\psi_{0}\left(
y\right)  )\\
&  =k_{0}^{2}\frac{\partial^{2}}{\partial\xi^{2}}+\frac{\partial^{2}}{\partial
y}-Q(y).
\end{align*}
%CCZ
By Strum-Liouville theory, all eigenvalues of ${\mathcal{L}}$ are simple. In
fact, as proven in Appendix of \cite{li-lin}, $-k_{0}^{2}$ $\ $is the only
negative eigenvalue of ${\mathcal{L}}$. Let $\phi_{0}(y)$ be the corresponding
positive eigenfunction. So the kernel of $\mathcal{G}:$ $\ B\mapsto D\ $is
given by
\[
\ker(\mathcal{G})=\left\{  \phi_{0}(y)\cos\xi\right\}  ,
\]
In particular, the dimension of $\ker$({$\mathcal{G}$}) is $1$. Since
{$\mathcal{G}$} is self-adjoint, $\phi_{0}(y)\cos\xi\not \in R(${$\mathcal{G}%
$}$)$ -- the range of {$\mathcal{G}$}. Notice that $\partial_{\alpha^{2}%
}\partial_{\phi}F(\phi,\alpha^{2})$ is continuous and
\[
\partial_{\alpha^{2}}\partial_{\phi}F(0,k_{0}^{2})\left(  \phi_{0}(y)\cos
\xi\right)  =\frac{\partial^{2}}{\partial\xi^{2}}\left[  \phi_{0}(y)\cos
\xi\right]  =-\phi_{0}(y)\cos\xi\not \in R({\mathcal{G}}).
\]
Therefore by the Crandall-Rabinowitz local bifurcation theorem \cite{CR71},
there exists a local bifurcating curve $\left(  \phi(\beta),\alpha^{2}%
(\beta)\right)  $ of $F(\phi,\alpha^{2})=0$, which intersects the trivial
curve $\left(  0,\alpha^{2}\right)  $ at $\alpha^{2}=k_{0}^{2}$, such that
\[
\phi(\beta)=\beta\phi_{0}(y)\cos\xi+o(\beta),
\]
$\alpha^{2}(\beta)$ is a continuous function of $\beta$, and $\alpha
^{2}(0)=k_{0}^{2}$. So the stream functions of the perturbed steady flows in
$(\xi,y)\ $coordinates take the form
\begin{equation}
\psi(\xi,y)=\psi_{0}\left(  y\right)  +\beta\phi_{0}(y)\cos\xi+o(\beta).
\label{cats-eye}%
\end{equation}
Since $\phi_{0}(y)>0$, $\psi_{0}^{\prime}\left(  0\right)  =U\left(  0\right)
=0$, the streamlines of perturbed flows have cat's eyes structure near
$\left\{  y=0\right\}  ,\ $with saddle points near $\left(  2\pi j,0\right)
$. The proof is completed.
\end{proof}

In the next lemma, we study the eigenvalue problem of $\mathcal{L\ }$for a
class of monotone shear flows near Couette flow. Let
\[
\operatorname{erf}\left(  x\right)  =\frac{2}{\sqrt{\pi}}\int_{0}^{x}%
e^{-s^{2}}ds,\ -\infty<x<+\infty,
\]
be the error function. For $\gamma>0,\ a>0,$ we define the shear profile
\begin{equation}
U_{\gamma,a}\left(  y\right)  =y+a\gamma^{2}\operatorname{erf}\left(  \frac
{y}{\gamma}\right)  ,\ \ y\in\left(  -1,1\right)  . \label{defn-shear}%
\end{equation}
Denote%
\begin{equation}
Q_{\gamma,a}\left(  y\right)  =\frac{U_{\gamma,a}^{\prime\prime}\left(
y\right)  }{U_{\gamma,a}\left(  y\right)  }, \label{defn-Q_gamma_a}%
\end{equation}
and
\[
\mathcal{L}_{\gamma,a}:H^{2}\left(  -1,1\right)  \rightarrow L^{2}\left(
-1,1\right)
\]
to be the operator $-\frac{d^{2}}{dy^{2}}+Q_{\gamma,a}\left(  y\right)  $ with
the Dirichlet conditions at $\left\{  y=\pm1\right\}  .$

\begin{lemma}
\label{lemma-eigenvalue}For any fixed $a>\frac{1}{2},\ $when $\gamma$ is small
enough, the operator $\mathcal{L}_{\gamma,a}$ has a unique negative eigenvalue
$-\beta_{\gamma,a}^{2}$. When $\gamma\rightarrow0,\ \beta_{\gamma,a}$ tends to
the unique root $\beta_{a}\ $of the equation
\begin{equation}
2a=\beta_{a}\coth\beta_{a}, \label{eigenvalue-formula}%
\end{equation}
with the error estimate $\left\vert \beta_{\gamma,a}-\beta_{a}\right\vert
=O\left(  \sqrt{\gamma}\right)  $.
\end{lemma}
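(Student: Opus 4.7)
The plan is to recognize $\mathcal{L}_{\gamma,a}$ as a singular perturbation of a delta-potential Schr\"odinger operator, solve the limit problem explicitly to extract $\beta_{a}$, and then transfer the conclusions back via Sturm-Liouville/min-max methods. First I would compute
\[
U'_{\gamma,a}(y) = 1 + \tfrac{2a\gamma}{\sqrt\pi}\,e^{-y^{2}/\gamma^{2}},\qquad U''_{\gamma,a}(y) = -\tfrac{4ay}{\sqrt\pi\,\gamma}\,e^{-y^{2}/\gamma^{2}}.
\]
Since $U_{\gamma,a}$ is odd and of the same sign as $y$, while $U''_{\gamma,a}$ is odd of the opposite sign, the ratio $Q_{\gamma,a}$ is a smooth, even, nonpositive function concentrated on the scale $|y|\sim\gamma$ about the origin (with $|Q_{\gamma,a}(0)|\sim 1/\gamma$). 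Rescaling $y=\gamma z$ I would verify
\[
\int_{-1}^{1} Q_{\gamma,a}(y)\,dy \;=\; -4a + O(\gamma),
\]
and more generally that $Q_{\gamma,a}$ converges weakly to $-4a\,\delta_{0}$ against any continuous test function that is Lipschitz near $0$, with quantitative $O(\gamma)$ error. This singles out $\mathcal{L}_{\infty}:=-d^{2}/dy^{2}-4a\,\delta_{0}$ on $(-1,1)$ with zero Dirichlet conditions as the correct limit operator.

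Next I would solve $\mathcal{L}_{\infty}\phi=-\beta^{2}\phi$, interpreted as $-\phi''=-\beta^{2}\phi$ on $(-1,0)\cup(0,1)$, continuity of $\phi$ at $0$, and the jump $\phi'(0^{+})-\phi'(0^{-})=-4a\,\phi(0)$. In the odd sector $\phi(0)=0$ and so the delta is invisible and no negative eigenvalue exists; in the even sector the ansatz $\phi(y)=A\sinh(\beta(1-|y|))$ automatically satisfies Dirichlet and continuity at $0$, and the jump condition collapses to $\beta\coth\beta=2a$. Since $\beta\mapsto\beta\coth\beta$ is smooth and strictly increasing from $1$ at $\beta=0^{+}$ to $+\infty$, the hypothesis $a>\tfrac12$ gives existence and uniqueness of the positive root $\beta_{a}$.

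Third I would transfer this to $\mathcal{L}_{\gamma,a}$ by min-max. Taking $\phi_{a}(y):=\sinh(\beta_{a}(1-|y|))$ as a trial function and using the weak form $\|\phi_{a}'\|_{L^{2}}^{2}=-\beta_{a}^{2}\|\phi_{a}\|_{L^{2}}^{2}+4a|\phi_{a}(0)|^{2}$ of the limit equation,
\[
\langle\mathcal{L}_{\gamma,a}\phi_{a},\phi_{a}\rangle \;=\; -\beta_{a}^{2}\|\phi_{a}\|_{L^{2}}^{2} \;+\; 4a|\phi_{a}(0)|^{2} \;+\; \int_{-1}^{1} Q_{\gamma,a}|\phi_{a}|^{2}\,dy,
\]
and the last two terms cancel to $O(\gamma)$ by Step~1 (the only residual comes from the Lipschitz corner of $|\phi_{a}|^{2}$ at $0$ integrated against the $\gamma$-scale bump $Q_{\gamma,a}$, which rescales to $O(\gamma)$). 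This yields the Rayleigh bound $\beta_{\gamma,a}^{2}\ge\beta_{a}^{2}-O(\gamma)$. For the matching bound I would extract a weak $H^{1}_{0}$ limit of the normalized ground state $\phi_{\gamma,a}$ (uniformly bounded since $-\beta_{\gamma,a}^{2}$ is bounded by the trial-function upper bound), use the embedding $H^{1}\hookrightarrow C^{1/2}$ to pass $|\phi_{\gamma,a}(0)|$ through, and recognize the weak limit as an even eigenfunction of $\mathcal{L}_{\infty}$ with eigenvalue $\le-\beta_{a}^{2}$; uniqueness in Step~2 forces equality. Uniqueness of the negative eigenvalue of $\mathcal{L}_{\gamma,a}$ for all small $\gamma$ then follows from Sturm-Liouville simplicity combined with the strict positivity of the second eigenvalue of $\mathcal{L}_{\infty}$ (all odd eigenvalues and all remaining even eigenvalues are positive) and continuity of eigenvalues in the potential.

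The step I expect to be the main obstacle is the quantitative rate $|\beta_{\gamma,a}-\beta_{a}|=O(\sqrt\gamma)$. The trial-function argument above already yields an $O(\gamma)$ one-sided bound, but the matching lower bound with a polynomial rate in $\gamma$ requires either a careful inner/outer matched asymptotic expansion of $\phi_{\gamma,a}$ (with an inner layer of width $\gamma$ around $y=0$ smoothing the corner of $\phi_{a}$, matched to an outer profile coinciding with $\phi_{a}$, whose solvability condition reproduces $\beta\coth\beta=2a$ to the required order), or an orthogonal-decomposition/Temple-type gap estimate isolating the component of $\phi_{\gamma,a}$ along the one-dimensional negative eigenspace of $\mathcal{L}_{\infty}$. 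Either route converts an $O(\gamma)$ Rayleigh-quotient error into an $O(\sqrt\gamma)$ eigenvalue error via the nonzero derivative of $\beta\mapsto\beta\coth\beta$ at $\beta_{a}$.
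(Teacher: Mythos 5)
Your overall architecture (interpret $Q_{\gamma,a}$ as an approximate delta potential of mass $-4a$, solve the limiting delta-potential problem explicitly to get $\beta\coth\beta=2a$, and transfer back variationally) is exactly the paper's, and your one-sided Rayleigh bound using $\phi_a=\sinh(\beta_a(1-|y|))$ as a trial function is correct, with the $O(\gamma)$ error coming from the Lipschitz corner as you say. The genuine gap is the matching bound with a rate, which the lemma explicitly asserts and which you defer to either matched asymptotics or a Temple-type estimate without carrying either out. Neither is needed, and your proposed mechanism for where $\sqrt{\gamma}$ comes from (``the nonzero derivative of $\beta\mapsto\beta\coth\beta$ at $\beta_a$'') is not the right one: an $O(\gamma)$ quadratic-form error would simply give an $O(\gamma)$ eigenvalue error. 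The correct and much simpler step, which is the paper's Step 3, is to run the trial-function comparison symmetrically: plug the perturbed ground state $\phi_{\gamma,a}$ (normalized in $L^2$) into the \emph{limit} functional $H_a(\phi)=\|\phi'\|_{L^2}^2-4a\phi(0)^2$. Then $\lambda_a\le H_a(\phi_{\gamma,a})=\lambda_{\gamma,a}+E_\gamma$, where $E_\gamma$ is precisely the discrepancy between $\int Q_{\gamma,a}\phi_{\gamma,a}^2\,dy$ and $-4a\phi_{\gamma,a}(0)^2$. Writing
\[
E_\gamma \;=\; 4a\int_{-1/\gamma}^{1/\gamma}\sigma(z)\bigl(\phi_{\gamma,a}^2(\gamma z)-\phi_{\gamma,a}^2(0)\bigr)\,dz \;+\; (\text{tails and }O(\gamma)\text{ corrections}),
\]
the point is that $\phi_{\gamma,a}$ is only known to be \emph{uniformly bounded in $H^1$} (which you do establish via the uniform lower bound on $\lambda_{\gamma,a}$), hence only $C^{0,1/2}$ with $|\phi_{\gamma,a}^2(\gamma z)-\phi_{\gamma,a}^2(0)|\le C(a)\sqrt{\gamma}\,|z|^{1/2}$; integrating against $\sigma(z)|z|^{1/2}$ gives $E_\gamma=O(\sqrt{\gamma})$. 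That limited regularity of the perturbed eigenfunction is the true source of the $\sqrt{\gamma}$, and combined with your $O(\gamma)$ bound in the other direction it yields $|\lambda_{\gamma,a}-\lambda_a|=O(\sqrt{\gamma})$, hence $|\beta_{\gamma,a}-\beta_a|=O(\sqrt{\gamma})$ since $\beta_a>0$. Your weak-limit argument, by contrast, can only deliver convergence without a rate.

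Two smaller points. First, in your Step 2 the limit operator's ground state should preferably be obtained as the minimizer of $\|\phi'\|^2-4a\phi(0)^2$ over $\|\phi\|_{L^2}=1$ (existence by the direct method, as the paper does); the Euler--Lagrange equation then produces the jump condition you wrote, so the explicit ODE matching and the variational picture agree. Second, the uniqueness of the negative eigenvalue of $\mathcal{L}_{\gamma,a}$ is not really proved by ``continuity of eigenvalues in the potential'' alone, since the potential is degenerating; one needs a uniform lower bound on the second min-max value (e.g.\ via the odd/even decomposition and a two-dimensional trial-space argument), and the paper itself delegates this to the appendix of the cited reference rather than proving it inside this lemma.
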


\begin{proof}
We write the potential function $Q_{\gamma,a}\left(  y\right)  $ as
\begin{align*}
Q_{\gamma,a}\left(  y\right)   &  =-\frac{4a}{\gamma\sqrt{\pi}}\frac
{ye^{-\left(  \frac{y}{\gamma}\right)  ^{2}}}{y+a\gamma^{2}\operatorname{erf}%
\left(  \frac{y}{\gamma}\right)  }\\
&  =-\frac{4a}{\gamma\sqrt{\pi}}e^{-\left(  \frac{y}{\gamma}\right)  ^{2}%
}\frac{1}{1+\gamma a\operatorname{erf}\left(  \frac{y}{\gamma}\right)
/\left(  \left(  \frac{y}{\gamma}\right)  \right)  }\\
&  =-4a\frac{1}{\gamma}\sigma\left(  \frac{y}{\gamma}\right)  \frac
{1}{1+\gamma a\Lambda\left(  \frac{y}{\gamma}\right)  },
\end{align*}
where
\[
\sigma\left(  y\right)  =\frac{1}{\sqrt{\pi}}e^{-y^{2}},\ \Lambda\left(
y\right)  =\frac{\operatorname{erf}\left(  y\right)  }{y}.
\]
Since $\Lambda\left(  y\right)  \ $is positive and bounded, we formally derive
that
\[
Q_{\gamma,a}\left(  y\right)  \rightarrow-4a\delta\left(  0\right)
,\ \text{when }\gamma\rightarrow0.
\]
Thus, when $\gamma\rightarrow0,\ $the operator $\mathcal{L}_{\gamma,a}$ tends
to $-\frac{d^{2}}{dy^{2}}-4a\delta\left(  0\right)  $, for which the
eigenvalue can be calculated by the formula (\ref{eigenvalue-formula}). We
implement these ideas rigorously below. We divide the proof into several steps.

Step 1: Denote $\lambda_{\gamma,a}$ to be the lowest eigenvalue of
$\mathcal{L}_{\gamma,a}$ and $\phi_{\gamma,a}$ the corresponding eigenfunction
with $\left\Vert \phi_{\gamma,a}\right\Vert _{L^{2}}=1$. We show that for
$\gamma>0$ small enough,
\[
-16a^{2}\leq\lambda_{\gamma,a}<0,
\]
and
\begin{equation}
\left\Vert \phi_{\gamma,a}\right\Vert _{H^{1}}\leq8a+1.
\label{estimate-eigenfunction}%
\end{equation}
Note that
\[
\lambda_{\gamma,a}=\min_{\substack{\left\Vert \phi\right\Vert _{L^{2}}%
=1\\\phi\in H_{0}^{1}}}\left(  \mathcal{L}_{\gamma,a}\phi,\phi\right)
=\min_{\substack{\left\Vert \phi\right\Vert _{L^{2}}=1\\\phi\in H_{0}^{1}%
}}\left(  \left\Vert \phi^{\prime}\right\Vert _{L^{2}\left(  -1,1\right)
}^{2}+\int_{-1}^{1}Q_{\gamma,a}\left(  y\right)  \phi\left(  y\right)
^{2}dy\right)  .
\]
Let $\phi_{1}\left(  y\right)  =\left(  1-\left\vert y\right\vert \right)  $,
then when $\gamma$ is small enough,
\[
\lambda_{\gamma,a}\leq\frac{\left(  \mathcal{L}_{\gamma,a}\phi_{1},\phi
_{1}\right)  }{\left\Vert \phi_{1}\right\Vert _{L^{2}}^{2}}\leq\frac{3}%
{4}\left(  2-4a\right)  <0,
\]
since$\left\Vert \phi_{1}\right\Vert _{L^{2}}^{2}=\frac{2}{3}$ and
\[
\lim_{\gamma\rightarrow0+}\left(  \mathcal{L}_{\gamma,a}\phi_{1},\phi
_{1}\right)  =\left\Vert \phi_{1}^{\prime}\right\Vert _{L^{2}\left(
-1,1\right)  }^{2}-4a\phi_{1}\left(  0\right)  ^{2}=2-4a<0\text{. }%
\]
To estimate the lower bound of $\lambda_{\gamma,a}$, we take any $\phi\in
H_{0}^{1}\left(  -1,1\right)  $ with $\left\Vert \phi\right\Vert _{L^{2}}=1$,
then
\begin{align}
\left(  \mathcal{L}_{\gamma,a}\phi,\phi\right)   &  \geq\left\Vert
\phi^{\prime}\right\Vert _{L^{2}\left(  -1,1\right)  }^{2}-\int_{-1}^{1}%
\frac{4a}{\gamma\sqrt{\pi}}e^{-\left(  \frac{y}{\gamma}\right)  ^{2}%
}dy\left\Vert \phi\right\Vert _{L^{\infty}}^{2}\label{estimate-quadratic}\\
&  \geq\left\Vert \phi^{\prime}\right\Vert _{L^{2}\left(  -1,1\right)  }%
^{2}-4a\left\Vert \phi\right\Vert _{L^{\infty}}^{2}\nonumber\\
&  \geq\left\Vert \phi^{\prime}\right\Vert _{L^{2}\left(  -1,1\right)  }%
^{2}-8a\left\Vert \phi\right\Vert _{L^{2}}\left\Vert \phi^{\prime}\right\Vert
_{L^{2}\left(  -1,1\right)  }\geq-16a^{2}.\nonumber
\end{align}
Taking the minimum of above estimate, we get $\lambda_{\gamma,a}\geq-16a^{2}$.
Moreover, again from estimate (\ref{estimate-quadratic}),
\[
0>\lambda_{\gamma,a}=\left(  \mathcal{L}_{\gamma,a}\phi_{\gamma,a}%
,\phi_{\gamma,a}\right)  \geq\left\Vert \phi_{\gamma,a}^{\prime}\right\Vert
_{L^{2}}^{2}-8a\left\Vert \phi_{\gamma,a}^{\prime}\right\Vert _{L^{2}}%
\]
which implies that $\left\Vert \phi_{\gamma,a}^{\prime}\right\Vert _{L^{2}%
}\leq8a$.

Step 2: Let $\lambda_{a}$ be defined by
\begin{equation}
\lambda_{a}=\min_{\substack{\left\Vert \phi\right\Vert _{L^{2}}=1\\\phi\in
H_{0}^{1}}}\left\Vert \phi^{\prime}\right\Vert _{L^{2}}^{2}-4a\phi\left(
0\right)  ^{2}. \label{min-problem-limit}%
\end{equation}
%CCZ
We show that $\lambda_{a}=-\beta_{a}^{2}$ where $\beta_{a}$ solves the
equation (\ref{eigenvalue-formula}).

First, we claim that the minimum of (\ref{min-problem-limit}) is obtained at
some function $\phi_{a}\in H_{0}^{1}$. To show this claim, we note that by the
same estimates as in Step 1,
\[
-16a^{2}\leq\lambda_{a}<\frac{3}{2}\left(  2-4a\right)  <0.
\]
Let $\left\{  \phi_{n}\right\}  _{n=1}^{\infty}\subset H_{0}^{1}$ be a
minimizing sequence of (\ref{min-problem-limit}) with $\left\Vert \phi
_{n}\right\Vert _{L^{2}}=1$ and
\[
\left\Vert \phi_{n}^{\prime}\right\Vert _{L^{2}}^{2}-4a\phi_{n}\left(
0\right)  ^{2}\rightarrow\lambda_{a},\text{ when }n\rightarrow\infty.
\]
Similar to the estimate (\ref{estimate-eigenfunction}), when $n$ is large, we
have $\left\Vert \phi_{n}\right\Vert _{H^{1}}\leq8a+1.$ Thus $\phi
_{n}\rightarrow\phi_{a}$ weakly $H^{1},$ and strongly in $L^{2}\cap L^{\infty
}.$ Therefore, $\left\Vert \phi_{a}\right\Vert _{L^{2}}=1$ and
\[
\left\Vert \phi_{a}^{\prime}\right\Vert _{L^{2}}^{2}-4a\phi_{a}\left(
0\right)  ^{2}\leq\lim_{n\rightarrow\infty}\left\Vert \phi_{n}^{\prime
}\right\Vert _{L^{2}}^{2}-4a\phi_{n}\left(  0\right)  ^{2}=\lambda_{a}.
\]
Thus $\phi_{a}$ is the minimizer of (\ref{min-problem-limit}).

%CCZ
By taking the variation of (\ref{min-problem-limit}), one immediately obtains
that $\phi_{a}\in H_{0}^{1}$ satisfies the equation
\begin{equation}
\phi_{a}^{\prime\prime}+4a\phi_{a}(0)+\lambda_{a}\phi_{a}=0 \label{CalL_a}%
\end{equation}
in the sense of distribution. In particular, $\phi_{a}$ is continuous on
$[-1,1]$, $\phi_{a}(\pm1)=0$, and satisfies
\[
\phi_{a}^{\prime\prime}+\lambda_{a}\phi_{a}=0\text{, on }[-1,1]\backslash
\{0\}.
\]
Therefore, we have
\[
\phi_{a}(y)=\mp c\sinh\left(  \sqrt{-\lambda_{a}}(y\mp1)\right)  ,\quad\pm
y\in(0,1],
\]
for some constant $c$. To satisfy \eqref{CalL_a}, it is easy to check that
one
%LZ-426
must have
\[
\phi_{a}^{\prime}\left(  0+\right)  -\phi_{a}^{\prime}\left(  0-\right)
=-4a\phi_{a}(0),
\]
from which it follows that $\lambda_{a}=-\beta_{a}^{2}$ and $\beta_{a}$ solves
the equation (\ref{eigenvalue-formula}). Note that since the function
\[
f\left(  \beta\right)  =\beta\coth\beta:[0,\infty)\rightarrow\lbrack\frac
{1}{2},\infty)
\]
is monotone increasing. So for each $a>\frac{1}{2}$, there exists a unique
$\beta_{a}=f^{-1}\left(  a\right)  $ such that (\ref{eigenvalue-formula}) is satisfied.

%CCZ
Step 3: We show that when $\gamma$ is small enough,%
\begin{equation}
\left\vert \lambda_{\gamma,a}-\lambda_{a}\right\vert \leq C\left(  a\right)
\sqrt{\gamma}\text{.} \label{inequality-eigenvalues}%
\end{equation}
Denote the quadratic forms
\[
H_{\gamma,a}\left(  \phi\right)  =\left\Vert \phi^{\prime}\right\Vert
_{L^{2}\left(  -1,1\right)  }^{2}+\int_{-1}^{1}Q_{\gamma,a}\left(  y\right)
\phi\left(  y\right)  ^{2}dy
\]
and
\[
H_{a}\left(  \phi\right)  =\left\Vert \phi^{\prime}\right\Vert _{L^{2}\left(
-1,1\right)  }^{2}-4a\phi\left(  0\right)  ^{2}%
\]
in $H_{0}^{1}\left(  -1,1\right)  $. Then
\begin{align*}
\lambda_{a}  &  \leq H_{a}\left(  \phi_{\gamma,a}\right) \\
&  =H_{\gamma,a}\left(  \phi_{\gamma,a}\right)  +4a\int_{-1}^{1}\frac
{1}{\gamma}\sigma\left(  \frac{y}{\gamma}\right)  \frac{1}{1+\gamma
a\Lambda\left(  \frac{y}{\gamma}\right)  }\phi_{\gamma,a}^{2}\left(  y\right)
dy-4a\phi_{\gamma,a}\left(  0\right)  ^{2}\\
&  =\lambda_{\gamma,a}+4a\int_{-\frac{1}{\gamma}}^{\frac{1}{\gamma}}%
\sigma\left(  y\right)  \left(  \phi_{\gamma,a}^{2}\left(  \gamma y\right)
-\phi_{\gamma,a}^{2}\left(  0\right)  \right)  dy-4a\int_{\left\vert
y\right\vert \geq\frac{1}{\gamma}}\sigma\left(  y\right)  dy\phi_{\gamma
,a}^{2}\left(  0\right) \\
&  \ \ \ \ \ \ +4a\int_{-1}^{1}\frac{1}{\gamma}\sigma\left(  \frac{y}{\gamma
}\right)  \frac{\gamma a\Lambda\left(  \frac{y}{\gamma}\right)  }{1+\gamma
a\Lambda\left(  \frac{y}{\gamma}\right)  }\phi_{\gamma,a}^{2}\left(  y\right)
dy\\
&  =\lambda_{\gamma,a}+T_{1}+T_{2}+T_{3}.
\end{align*}
Since%
\begin{align*}
\left\vert \phi_{\gamma,a}^{2}\left(  \gamma y\right)  -\phi_{\gamma,a}%
^{2}\left(  0\right)  \right\vert  &  \leq2\left\Vert \phi_{\gamma
,a}\right\Vert _{L^{\infty}}\left\vert \int_{0}^{\gamma y}\phi_{\gamma
,a}^{\prime}\left(  s\right)  ds\right\vert \\
&  \leq C\left\Vert \phi_{\gamma,a}\right\Vert _{H^{1}\left(  -1,1\right)
}^{2}\sqrt{\gamma}\left\vert y\right\vert ^{\frac{1}{2}}\leq C\left(
8a+1\right)  \sqrt{\gamma}\left\vert y\right\vert ^{\frac{1}{2}},
\end{align*}
so%
\[
\left\vert T_{1}\right\vert \leq C\left(  a\right)  \sqrt{\gamma}%
\int_{\mathbf{R}}\sigma\left(  y\right)  \left\vert y\right\vert ^{\frac{1}%
{2}}dy\leq C\left(  a\right)  \sqrt{\gamma}.
\]
When $\gamma$ is small enough, we have
\[
\left\vert T_{2}\right\vert \leq C\left(  a\right)  \left\Vert \phi_{\gamma
,a}\right\Vert _{L^{\infty}}^{2}\int_{\left\vert y\right\vert \geq\frac
{1}{\gamma}}\sigma\left(  y\right)  dy\leq C\left(  a\right)  \sqrt{\gamma},
\]
and
\[
\left\vert T_{3}\right\vert \leq C\left(  a\right)  \left\Vert \phi_{\gamma
,a}\right\Vert _{L^{\infty}}^{2}\gamma\int_{\mathbf{R}}\sigma\left(  y\right)
dy\leq C\left(  a\right)  \sqrt{\gamma}.
\]
Thus
%LZ-426%
\[
\lambda_{a}-\lambda_{\gamma,a}\leq C\left(  a\right)  \sqrt{\gamma}%
\]
and similarly
\[
\lambda_{\gamma,a}-\lambda_{a}\leq C\left(  a\right)  \sqrt{\gamma}.
\]
%CCZ
This finishes the proof of (\ref{inequality-eigenvalues}) and thus also the lemma.
\end{proof}

We are now ready to prove Theorem \ref{thm-existence}.

\begin{proof}
[Proof of Theorem \ref{thm-existence}]Fixed $T>0$, there exists $\frac{1}%
{2}<a_{1}<a_{2}$ such that
\[
\beta_{a_{1}}<\frac{2\pi}{T}<\beta_{a_{2}}\text{.}%
\]
By Lemma \ref{lemma-eigenvalue}, there exists $\gamma_{0}>0$ small enough,
such that when $0<\gamma<\gamma_{0}$, for all $a\in\left(  a_{1},a_{2}\right)
\ $the operator $\mathcal{L}_{\gamma,a}$ has a negative eigenvalue
$\lambda_{\gamma,a}$ and
\begin{equation}
\sqrt{-\lambda_{\gamma,a_{1}}}<\frac{2\pi}{T}<\sqrt{-\lambda_{\gamma,a_{2}}%
}\text{.} \label{inequality-eigen}%
\end{equation}
We show that: for $a\in\left(  a_{1},a_{2}\right)  ,$ $s\in\lbrack0,\frac
{3}{2}),$
\begin{equation}
\left\Vert U_{\gamma,a}^{\prime}\left(  y\right)  -1\right\Vert _{H^{s}\left(
-1,1\right)  }\rightarrow0,\text{ when }\gamma\rightarrow0. \label{limit-H-s}%
\end{equation}
Indeed,
\[
U_{\gamma,a}^{\prime}\left(  y\right)  -1=\frac{2a\gamma}{\sqrt{\pi}%
}e^{-\left(  \frac{y}{\gamma}\right)  ^{2}},
\]
so
\[
\left\Vert U_{\gamma,a}^{\prime}\left(  y\right)  -1\right\Vert _{H^{s}\left(
-1,1\right)  }\leq C\left\Vert \gamma e^{-\left(  \frac{y}{\gamma}\right)
^{2}}\right\Vert _{H^{s}\left(  \mathbf{R}\right)  }.
\]
%CCZ
Using the Fourier transform, one may compute explicitly
\[
\left\Vert \gamma e^{-\left(  \frac{y}{\gamma}\right)  ^{2}}\right\Vert
_{\dot{H}^{s}\left(  \mathbf{R}\right)  }=C_{s}\gamma^{\frac{3}{2}-s},\
\]
which implies (\ref{limit-H-s}) by our assumption that $s<\frac{3}{2}$.
Thus For any $\varepsilon>0$, by choosing $\gamma_{0}$ small enough, we can
assume that
\begin{equation}
\left\Vert U_{\gamma,a}^{\prime}\left(  y\right)  -1\right\Vert _{H^{s}\left(
-1,1\right)  }\leq\frac{\varepsilon}{2T},\text{ when }\left(  \gamma,a\right)
\in\left(  0,\gamma_{0}\right)  \times\left(  a_{1},a_{2}\right)  .
\label{inequlity-shear}%
\end{equation}
By Lemma \ref{lemma-bifurcation}, for any $\left(  \gamma,\delta\right)
\in\left(  0,\gamma_{0}\right)  \times\left(  a_{1},a_{2}\right)  \,,$ there
exists local bifurcation of non-parallel steady flows (Cats's eyes) of Euler
equation (\ref{Euler})-(\ref{Euler-bc}), near the shear flow $\left(
U_{\gamma,a}\left(  y\right)  ,0\right)  $. For each fixed $0<\gamma
<\gamma_{0},\ $we can find $r_{0}>0$ (independent of $a\in\left(  a_{1}%
,a_{2}\right)  \ $) such that for any $0<r<r_{0}\,$, there exists a nontrivial
steady solution
\[
\left(  u_{\gamma,a;r}\left(  x,y\right)  ,v_{\gamma,a;r}\left(  x,y\right)
\right)
\]
with vorticity $\omega_{\gamma,a;r}\left(  x,y\right)  $ which has $x-$period
$T\left(  \gamma,a;r\right)  $ and
\[
\left\Vert \omega_{\gamma,a;r}-U_{\gamma,a}^{\prime}\left(  y\right)
\right\Vert _{H^{2}\left(  0,T\left(  \gamma,a;r\right)  \right)
\times\left(  -1,1\right)  }=r.
\]
Moreover,
\[
\frac{2\pi}{T\left(  \gamma,a;r\right)  }\rightarrow\sqrt{-\lambda_{\gamma,a}%
}\text{, when }r\rightarrow0.
\]
By (\ref{inequality-eigen}), when $r_{0}$ is small enough,
\[
T\left(  \gamma,a_{1};r\right)  <T<T\left(  \gamma,a_{2};r\right)  ,\text{ for
}0<r<r_{0}.
\]
Since $T\left(  \gamma,a;r\right)  $ is continuous to $a,$ for each $\gamma
\in\left(  0,\gamma_{0}\right)  $ and $r>0$ small enough, there exists
$a_{T}\left(  \gamma,r\right)  \in\left(  a_{1},a_{2}\right)  \,$, such that
$T\left(  \gamma,a_{T};r\right)  =T$. Then the flow
\[
\left(  u_{\gamma;r}\left(  x,y\right)  ,v_{\gamma;r}\left(  x,y\right)
\right)  :=\left(  u_{\gamma,a_{T};r}\left(  x,y\right)  ,v_{\gamma,a_{T}%
;r}\left(  x,y\right)  \right)
\]
with the vorticity $\omega_{\gamma;r}=\omega_{\gamma,a_{T};r}$ is a nontrivial
steady solution of Euler equation, with $x-$period $T$ and
\[
\left\Vert \omega_{\gamma;r}-U_{\gamma,a_{T}}^{\prime}\left(  y\right)
\right\Vert _{H^{2}\left(  0,T\right)  \times\left(  -1,1\right)  }=r.
\]
Thus for any $0<r<\min\left\{  \gamma_{0},\frac{\varepsilon}{2}\right\}  $,
combining with (\ref{inequlity-shear}) we have
\[
\left\Vert \omega_{\gamma;r}\left(  x,y\right)  -1\right\Vert _{H^{s}\left(
0,T\right)  \times\left(  -1,1\right)  }<\varepsilon.
\]
This finishes the proof of Theorem \ref{thm-existence}.
\end{proof}

For the shear flow $U_{\gamma,a}\left(  y\right)  \ $defined by
(\ref{defn-shear}), there is only one inflection point at $y=0$. The following
Lemma about linear instability of $\left(  U_{\gamma,a}\left(  y\right)
,0\right)  \ $follows from the result in \cite{lin-siam}.

\begin{lemma}
\label{lemma-insta-shear}If the operator $\mathcal{L}_{\gamma,a}$ has a
negative eigenvalue $\lambda_{\gamma,a}<0$, then the shear flow $\left(
U_{\gamma,a}\left(  y\right)  ,0\right)  $ is linearly exponentially unstable
to perturbations of any $x-$period greater than $\frac{2\pi}{\sqrt
{-\lambda_{\gamma,a}}}$.
\end{lemma}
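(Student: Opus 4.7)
The plan is to reduce the linear stability problem to a spectral condition on the Rayleigh equation and then quote the continuation argument of \cite{lin-siam}. Linearizing the 2D Euler equation around $(U_{\gamma,a}(y),0)$ in $S_T\times(-1,1)$ and searching for a normal mode $\psi(y)e^{i\alpha(x-ct)}$ with $\alpha=2\pi/T$ leads to the Rayleigh equation
\[
(U_{\gamma,a}-c)(\psi''-\alpha^{2}\psi) - U_{\gamma,a}''\,\psi = 0,\qquad \psi(\pm 1)=0,
\]
and exponential instability of the shear is equivalent to the existence of such a nontrivial $\psi$ with some $c\in\mathbf{C}$, $\operatorname{Im} c>0$.

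The key observation is that $U_{\gamma,a}$ is odd and strictly monotone on $[-1,1]$, with its unique inflection point located at $y=0$, where moreover $U_{\gamma,a}(0)=0$. Therefore $c=0$ is an admissible phase speed: the critical layer coincides with the inflection point and the Rayleigh equation is nonsingular. At $c=0$ it reduces to
\[
-\psi'' + Q_{\gamma,a}(y)\,\psi = -\alpha^{2}\psi,\qquad \psi(\pm 1)=0,
\]
which is exactly the eigenvalue problem $\mathcal{L}_{\gamma,a}\psi=-\alpha^{2}\psi$. Hence the hypothesis $\lambda_{\gamma,a}<0$ produces a real neutral mode at the critical wavenumber $\alpha_{0}:=\sqrt{-\lambda_{\gamma,a}}$ with phase speed $c=0$.

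Finally, I would invoke the main instability theorem of \cite{lin-siam}, which for monotone shears whose (unique) inflection point coincides with a zero of $U$ asserts that such a neutral mode lies on the boundary of the unstable region in the $(\alpha,c)$ parameter space: as $\alpha$ decreases through $\alpha_{0}$, the eigenvalue $c$ bifurcates off the real axis into the upper half-plane, yielding a genuinely unstable Rayleigh mode for every $\alpha\in(0,\alpha_{0})$. Any period $T>2\pi/\alpha_{0}$ satisfies $\alpha=2\pi/T\in(0,\alpha_{0})$, and the corresponding unstable mode is automatically $T$-periodic in $x$. The only work left is to verify that the profile $U_{\gamma,a}$ satisfies the hypotheses of the cited theorem (odd, strictly monotone on $[-1,1]$, single inflection point at the zero $y=0$, and $\mathcal{L}_{\gamma,a}$ has a negative eigenvalue), all of which are immediate from the definition \eqref{defn-shear}. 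The genuinely hard step --- the perturbation analysis of the critical layer as $c$ leaves the real axis and the construction of the unstable branch bifurcating from the neutral mode --- is already carried out in \cite{lin-siam}, so no further work is required here.
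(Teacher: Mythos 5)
Your proposal is correct and takes essentially the same route as the paper, which proves this lemma by simply observing that it follows from the main instability theorem of \cite{lin-siam} for monotone shears in class $\mathcal{K}^{+}$. Your additional detail --- the neutral Rayleigh mode at $c=0$, $\alpha_{0}=\sqrt{-\lambda_{\gamma,a}}$ arising from the eigenvalue problem for $\mathcal{L}_{\gamma,a}$, and the unstable branch for all $\alpha\in(0,\alpha_{0})$ --- is precisely the content of that cited result, so nothing further is needed.
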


From above Lemma, it is easy to prove Corollary \ref{cor-insta}.

\begin{proof}
[Proof of Corollary \ref{cor-insta}]For any fixed $T>0$, pick $a>\frac{1}{2}$
such that $T>\frac{2\pi}{\beta_{a}}.$Then there exists $\gamma$ small enough
such that
\[
\lambda_{\gamma,a}<0,\ T>\frac{2\pi}{\sqrt{-\lambda_{\gamma,a}}}.
\]
By Lemma \ref{lemma-insta-shear}, the shear flow $\left(  U_{\gamma,a}\left(
y\right)  ,0\right)  $ is linearly exponentially unstable to perturbations of
$x-$period $T$. For any $\varepsilon>0,\ $if $\gamma$ is small enough, by
(\ref{limit-H-s}) we can let
\[
\left\Vert U_{\gamma,a}^{\prime}\left(  y\right)  -1\right\Vert _{H^{s}\left(
-1,1\right)  }<\varepsilon
\]
and this finishes the proof.
\end{proof}

\begin{remark}
We can use more general shear profiles than $U_{\gamma,a}\left(  y\right)  $
in (\ref{defn-shear}) to construct cats's eyes flows near Couette. More
precisely, define
\[
U_{\gamma,a}\left(  y\right)  =y+a\gamma^{2}h\left(  \frac{y}{\gamma}\right)
,
\]
where $h\in C^{5}\left(  \mathbf{R}\right)  $ is odd, $h^{\prime}\in
H^{2}\left(  \mathbf{R}\right)  ,$ and
\[
\int_{\mathbf{R}}\frac{h^{\prime\prime}\left(  x\right)  }{x}dx=b_{0}%
>0,\ \ \ a>\frac{2}{b_{0}}.
\]
By the same proof of Lemma \ref{lemma-eigenvalue}, when $\gamma$ is small
enough, the operator
\[
\mathcal{L}_{\gamma,a}:=-\frac{d^{2}}{dy^{2}}+Q_{\gamma,a}\left(  y\right)
,\ \text{with }Q_{\gamma,a}\left(  y\right)  =\frac{U_{\gamma,a}^{\prime
\prime}\left(  y\right)  }{U_{\gamma,a}\left(  y\right)  },
\]
has a negative eigenvalue $-\beta_{\gamma,a}^{2}$, where
\[
\left\vert \beta_{\gamma,a}-\beta_{a}\right\vert =O\left(  \sqrt{\gamma
}\right)  \text{ and }\frac{b_{0}a}{2}=\beta_{a}\coth\beta_{a}.
\]
Then the same proof of Theorem \ref{thm-existence} yields cats's eyes flows
bifurcating form $\left(  U_{\gamma,a}\left(  y\right)  ,0\right)  .$ Such
shear flows $\left(  U_{\gamma,a}\left(  y\right)  ,0\right)  $ are
exponentially unstable for perturbations with $x-$period $T$ near $\frac{2\pi
}{\beta_{\gamma,a}}$.
\end{remark}

\section{Non-existence of traveling waves in H$^{s}\left(  s>\frac{3}%
{2}\right)  $}

In this Section, we prove Theorem \ref{thm-non-existence}. For the proof, we
need a few lemmas. The first lemma is a Hardy type inequality.

\begin{lemma}
\label{lemma-Hardy}Let $s\in\left(  \frac{1}{2},\frac{3}{2}\right)  $. If
$u\left(  y\right)  \in H^{s}\left(  \mathbf{-}1,1\right)  ,\ $and $u\left(
y_{0}\right)  =0$ for some $y_{0}\in\left[  -1,1\right]  ,\ $then for any
$1\leq p<\frac{1}{\frac{3}{2}-s},$%
\[
\left\Vert \frac{u\left(  y\right)  }{y-y_{0}}\right\Vert _{L^{p}\left(
-1,1\right)  }\leq C\left(  p\right)  \left\Vert u\right\Vert _{H^{s}\left(
\mathbf{-}1,1\right)  }.
\]

\end{lemma}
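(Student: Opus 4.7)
The plan is to reduce everything to the fractional Sobolev embedding in dimension one. For $s \in (1/2,3/2)$, we have $H^s(-1,1) \hookrightarrow C^{0,s-1/2}([-1,1])$, so every representative of $u$ is Hölder continuous with exponent $\alpha := s-\tfrac{1}{2} \in (0,1)$ and
\[
|u(y) - u(z)| \leq C \|u\|_{H^s(-1,1)} |y-z|^{s-\frac{1}{2}}, \qquad y,z \in [-1,1].
\]
I would quote this as a standard result; in one dimension it follows, for instance, from the Slobodeckij characterization of $H^s$ together with Morrey's inequality applied to the fractional difference norm.

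Specializing to $z = y_0$ and using $u(y_0) = 0$, I obtain the pointwise bound
\[
|u(y)| \leq C \|u\|_{H^s(-1,1)} |y-y_0|^{s-\frac{1}{2}},
\]
which, after dividing by $|y-y_0|$, gives
\[
\left|\frac{u(y)}{y-y_0}\right| \leq C \|u\|_{H^s(-1,1)} |y-y_0|^{s-\frac{3}{2}}.
\]
Taking the $L^p(-1,1)$ norm and absorbing the constant, the remaining task is to verify that $|y-y_0|^{s-3/2} \in L^p(-1,1)$.

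The singularity $|y-y_0|^{s-3/2}$ is integrable to the $p$-th power exactly when $p(s-\tfrac{3}{2}) > -1$, i.e. $p < \frac{1}{3/2 - s}$, which is the assumed hypothesis. Under this condition,
\[
\int_{-1}^{1} |y-y_0|^{p(s-3/2)}\, dy \leq \int_{-2}^{2} |t|^{p(s-3/2)}\, dt =: K(p,s) < \infty,
\]
uniformly in $y_0 \in [-1,1]$, so raising to the $1/p$ power yields the claimed inequality with $C(p) = C \cdot K(p,s)^{1/p}$. No step really presents an obstacle; the only point that requires care is citing the correct Sobolev--Morrey embedding in 1D for fractional $s$, and ensuring the integrability exponent is handled with strict inequality so that the constant $K(p,s)$ stays finite.
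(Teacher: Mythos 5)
Your proposal is correct and follows essentially the same route as the paper: both use the embedding $H^{s}(-1,1)\hookrightarrow C^{0,s-\frac{1}{2}}$ to get the pointwise bound $|u(y)|\leq C|y-y_{0}|^{s-\frac{1}{2}}\left\Vert u\right\Vert _{H^{s}}$ and then integrate $|y-y_{0}|^{(s-\frac{3}{2})p}$, which is finite precisely because $p<\frac{1}{\frac{3}{2}-s}$. No further changes are needed.
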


\begin{proof}
Since$\ s>\frac{1}{2}$, the space $H^{s}\left(  -1,1\right)  $ is embedded to
the H\"{o}lder space $C^{0,s-\frac{1}{2}}\left(  -1,1\right)  $. So
\[
\left\vert u\left(  y\right)  \right\vert =\left\vert u\left(  y\right)
-u\left(  y_{0}\right)  \right\vert \leq\left\vert y-y_{0}\right\vert
^{s-\frac{1}{2}}\left\Vert u\right\Vert _{C^{0,\alpha}}\leq C\left\vert
y-y_{0}\right\vert ^{s-\frac{1}{2}}\left\Vert u\right\Vert _{H^{s}},
\]
and
\begin{align*}
\int_{-1}^{1}\left\vert \frac{u\left(  y\right)  }{y-y_{0}}\right\vert ^{p}dy
&  \leq\left\Vert u\right\Vert _{H^{s}}^{p}\int_{-1}^{1}\frac{1}{\left\vert
y-y_{0}\right\vert ^{\left(  \frac{3}{2}-s\right)  p}}dv\\
&  =\frac{1}{1-\left(  \frac{3}{2}-s\right)  p}\left(  \left(  1-y_{0}\right)
^{1-\left(  \frac{3}{2}-s\right)  p}+\left(  y_{0}+1\right)  ^{1-\left(
\frac{3}{2}-s\right)  p}\right)  \left\Vert u\right\Vert _{H^{s}}^{p}\\
&  \leq\frac{1}{1-\left(  \frac{3}{2}-s\right)  p}2^{1-\left(  \frac{3}%
{2}-s\right)  p}\left\Vert u\right\Vert _{H^{s}}^{p}.
\end{align*}
This finishes the proof.
\end{proof}

\begin{proof}
[Proof of Theorem \ref{thm-non-existence}]Suppose otherwise, then there exist
a sequence $\varepsilon_{n}\rightarrow0$, and travelling solutions $\left(
u_{n}\left(  x-c_{n}t,y\right)  ,v_{n}\left(  x-c_{n}t,y\right)  \right)  $ to
Euler equation (\ref{Euler})-(\ref{Euler-bc}) which are $T-$periodic in $x$
and such that $v_{n}$ is not identically zero,
\begin{equation}
\left\Vert \omega_{n}-1\right\Vert _{H_{\left(  0,T\right)  \times\left(
-1,1\right)  }^{s}}<\varepsilon_{n},\text{ where }\omega_{n}\left(
x,y\right)  =\partial_{y}u_{n}-\partial_{x}v_{n}\text{. }
\label{estimate-vor-steady}%
\end{equation}
We can assume that
\begin{equation}
\int_{0}^{T}\int_{-1}^{1}u_{n}\left(  x,y\right)  dydx=0,
\label{condition-zero-u}%
\end{equation}
otherwise we consider the travelling wave
\[
u_{n}\left(  x-\left(  c_{n}+d_{n}\right)  t,y\right)  -d_{n},v_{n}\left(
x-\left(  c_{n}+d_{n}\right)  t,y\right)  ,
\]
with
\[
d_{n}=\frac{1}{2T}\int_{0}^{T}\int_{-1}^{1}u_{n}\left(  x,y\right)  dydx.
\]
The travelling wave solutions satisfy the vorticity equation%
\begin{equation}
\left(  u_{n}-c_{n}\right)  \partial_{x}\omega_{n}+v_{n}\partial_{y}\omega
_{n}=0. \label{eqn-vor-steady}%
\end{equation}
Because of the condition (\ref{condition-zero-u}), $\left(  u_{n}%
,v_{n}\right)  $ is uniquely determined by the vorticity $\omega_{n}$ and
\[
\left\Vert \left(  u_{n},v_{n}\right)  -\left(  y,0\right)  \right\Vert
_{H_{\left(  0,T\right)  \times\left(  -1,1\right)  }^{s+1}}\leq C\left\Vert
\omega_{n}-1\right\Vert _{H_{\left(  0,T\right)  \times\left(  -1,1\right)
}^{s}}\leq C\varepsilon_{n}.
\]
Since $s>\frac{3}{2},$
\[
\left\Vert \partial_{y}u_{n}-1\right\Vert _{L^{\infty}\left(  0,T\right)
\times\left(  -1,1\right)  }\leq\left\Vert u_{n}-y\right\Vert _{H_{\left(
0,T\right)  \times\left(  -1,1\right)  }^{s+1}}\leq C\varepsilon_{n},
\]
thus when $n$ is large,
\begin{equation}
\frac{1}{2}<\partial_{y}u_{n}<\frac{3}{2}\text{, in }\left(  0,T\right)
\times\left[  -1,1\right]  . \label{estimate-y-derivative}%
\end{equation}
Therefore, for each $x\in\left(  0,T\right)  $, $u_{n}\left(  x,y\right)  $ is
strictly increasing for $y\in\left[  -1,1\right]  $. We divide $\left(
0,T\right)  $ into three subsets%
\[
P_{n}=\left\{  x\ |\ c_{n}\leq u_{n}\left(  x,-1\right)  \right\}
,\ Q_{n}=\left\{  x\ |\ c_{n}\geq u_{n}\left(  x,1\right)  \right\}  ,
\]
and
\[
S_{n}=\left\{  x\ |\ u_{n}\left(  x,-1\right)  <c_{n}<u_{n}\left(  x,1\right)
\right\}  .
\]
When $x\in S_{n}$, there exists a unique $y_{n}\left(  x\right)  \in\left(
-1,1\right)  $ such that $u_{n}\left(  x,y_{n}\left(  x\right)  \right)
=c_{n}$. From (\ref{eqn-vor-steady}), it follows that
\[
v_{n}\left(  x,y_{n}\left(  x\right)  \right)  =0\text{ or }\partial_{y}%
\omega_{n}\left(  x,y_{n}\left(  x\right)  \right)  =0,
\]
and we further divide $S_{n}$ into two subsets
\begin{align*}
S_{n}^{1}  &  =\left\{  x\in S_{n}\ |\ v_{n}\left(  x,y_{n}\left(  x\right)
\right)  =0\text{ }\right\}  ,\ \\
S_{n}^{2}  &  =\left\{  x\in S_{n}\ |\ \partial_{y}\omega_{n}\left(
x,y_{n}\left(  x\right)  \right)  =0\text{ }\right\}  .
\end{align*}
By the incompressible condition (\ref{incompre}),
\[
\partial_{x}\omega_{n}=\partial_{x}\left(  \partial_{y}u_{n}-\partial_{x}%
v_{n}\right)  =-\Delta v_{n}.
\]
Since $v_{n}\left(  x,\pm1\right)  =0$ by (\ref{Euler-bc}), by integration by
parts and using (\ref{eqn-vor-steady}), we get%
\begin{align}
\int_{0}^{T}\int_{-1}^{1}\left\vert \nabla v_{n}\right\vert ^{2}\ dydx  &
=\int_{0}^{T}\int_{-1}^{1}v_{n}\partial_{x}\omega_{n}\ dydx=-\int_{0}^{T}%
\int_{-1}^{1}v_{n}\frac{v_{n}\partial_{y}\omega_{n}}{u_{n}-c_{n}%
}\ dydx\label{inequlity-steady-variational}\\
&  \leq\int_{P_{n}}\int_{-1}^{1}\left\vert v_{n}\frac{v_{n}}{u_{n}-c_{n}%
}\ \partial_{y}\omega_{n}\right\vert dydx+\int_{Q_{n}}\int_{-1}^{1}\left\vert
v_{n}\frac{v_{n}}{u_{n}-c_{n}}\ \partial_{y}\omega_{n}\right\vert
dydx\nonumber\\
&  \ \ \ +\int_{S_{n}^{1}}\int_{-1}^{1}\left\vert v_{n}\frac{v_{n}}%
{u_{n}-c_{n}}\ \partial_{y}\omega_{n}\right\vert dydx+\int_{S_{n}^{2}}%
\int_{-1}^{1}\left\vert v_{n}^{2}\frac{\partial_{y}\omega_{n}}{u_{n}-c_{n}%
}\ \right\vert dydx\nonumber\\
&  =I+II+III+IV.\nonumber
\end{align}
Since $\left(  \frac{\pi}{2}\right)  ^{2}$ is the lowest eigenvalue of
$-\Delta$ on $\left(  0,T\right)  \times\left(  -1,1\right)  $ with periodic
boundary condition in $x$ and Dirichlet boundary condition in $y$,
\[
\left\Vert \nabla v_{n}\right\Vert _{L_{\left(  0,T\right)  \times\left(
-1,1\right)  }^{2}}^{2}\geq\left(  \frac{\pi}{2}\right)  ^{2}\left\Vert
v_{n}\right\Vert _{L_{\left(  0,T\right)  \times\left(  -1,1\right)  }^{2}%
}^{2}.
\]
Thus by Sobolev embedding, for any $p>1,$
\begin{equation}
\left\Vert v_{n}\right\Vert _{L_{\left(  0,T\right)  \times\left(
-1,1\right)  }^{p}}\leq C\left(  p\right)  \left\Vert v_{n}\right\Vert
_{H_{\left(  0,T\right)  \times\left(  -1,1\right)  }^{1}}\leq C\left(
p\right)  \left\Vert \nabla v_{n}\right\Vert _{L_{\left(  0,T\right)
\times\left(  -1,1\right)  }^{2}}. \label{estimate-v-n}%
\end{equation}
Since
\begin{equation}
\left\Vert \partial_{y}\omega_{n}\right\Vert _{H_{\left(  0,T\right)
\times\left(  -1,1\right)  }^{s-1}}\leq\left\Vert \omega_{n}-1\right\Vert
_{H_{\left(  0,T\right)  \times\left(  -1,1\right)  }^{s}}<\varepsilon_{n},
\label{estimate-par-vort}%
\end{equation}
again by Sobolev embedding,
\[
\left\Vert \partial_{y}\omega_{n}\right\Vert _{L_{\left(  0,T\right)
\times\left(  -1,1\right)  }^{p}}\leq C\left(  p\right)  \varepsilon_{n}\text{
for any }1<p<\frac{2}{\left(  2-s\right)  _{+}},
\]
where
\[
\left(  2-s\right)  _{+}=\max\left\{  2-s,0\right\}  .
\]
So we can always choose $p_{1},p_{2},p_{3}$ such that
\[
p_{1}>1,\ \ 1<p_{2}<\frac{2}{\left(  2-s\right)  _{+}},\ \ 1<p_{3}<2,
\]
\ and
\[
\frac{1}{p_{1}}+\frac{1}{p_{2}}+\frac{1}{p_{3}}=1.
\]
When $x\in P_{n},$\
\[
\left\vert u_{n}\left(  x,y\right)  -c_{n}\right\vert \geq\left\vert
u_{n}\left(  x,y\right)  -u_{n}\left(  x,-1\right)  \right\vert ,
\]
so$\ $
\begin{align*}
I  &  \leq\left\Vert v_{n}\right\Vert _{L_{\left(  0,T\right)  \times\left(
-1,1\right)  }^{p_{1}}}\left\Vert \partial_{y}\omega_{n}\right\Vert
_{L_{\left(  0,T\right)  \times\left(  -1,1\right)  }^{p_{2}}}\left\Vert
\frac{v_{n}}{y+1}\right\Vert _{L_{P_{n}\times\left(  -1,1\right)  }^{p_{3}}%
}\left\Vert \frac{y+1}{u_{n}\left(  x,y\right)  -u_{n}\left(  x,-1\right)
}\right\Vert _{L_{\left(  0,T\right)  \times\left(  -1,1\right)  }^{\infty}}\\
&  \leq C\varepsilon_{n}\left\Vert \nabla v_{n}\right\Vert _{L_{\left(
0,T\right)  \times\left(  -1,1\right)  }^{2}}\left(  \int_{P_{n}}\left\Vert
\frac{v_{n}}{y+1}\right\Vert _{L^{p_{3}}\left(  -1,1\right)  }^{p_{3}%
}dx\right)  ^{\frac{1}{p_{3}}}\text{ }\\
&  \leq C\varepsilon_{n}\left\Vert \nabla v_{n}\right\Vert _{L_{\left(
0,T\right)  \times\left(  -1,1\right)  }^{2}}\left(  \int_{P_{n}}\left\Vert
v_{n}\right\Vert _{H^{1}\left(  -1,1\right)  }^{p_{3}}\ dx\right)  ^{\frac
{1}{p_{3}}}\text{ (By Lemma \ref{lemma-Hardy})}\\
&  \leq C\varepsilon_{n}\left\Vert \nabla v_{n}\right\Vert _{L_{\left(
0,T\right)  \times\left(  -1,1\right)  }^{2}}\left\Vert v_{n}\right\Vert
_{H_{\left(  0,T\right)  \times\left(  -1,1\right)  }^{1}}T^{\frac{1}{p_{3}%
}-\frac{1}{2}}\leq C\varepsilon_{n}\left\Vert \nabla v_{n}\right\Vert
_{L_{\left(  0,T\right)  \times\left(  -1,1\right)  }^{2}}^{2}.
\end{align*}
Here in the second inequality above, we use (\ref{estimate-v-n}),
(\ref{estimate-par-vort}) and the estimate
\[
\left\vert \frac{y+1}{u_{n}\left(  x,y\right)  -u_{n}\left(  x,-1\right)
}\right\vert =\frac{1}{\left\vert \partial_{y}u_{n}\left(  x,\tilde{y}\right)
\right\vert }\leq2,\ \text{ }\tilde{y}\in\left(  -1,y\right)  \text{,\ }%
\]
due to (\ref{estimate-y-derivative}). By similar estimates as that for $I$, we
get
\[
II,\ III\leq C\varepsilon_{n}\left\Vert \nabla v_{n}\right\Vert _{L_{\left(
0,T\right)  \times\left(  -1,1\right)  }^{2}}^{2}.
\]
To estimate $IV,$ we choose
%CCZ%
\[
1<p_{1}<\min\{2,\frac{1}{\left(  \frac{5}{2}-s\right)  _{+}}\},\ \ p_{2}%
=2p_{1}^{\prime},
\]
then
\begin{align*}
IV  &  \leq C\left\Vert v_{n}\right\Vert _{L_{\left(  0,T\right)
\times\left(  -1,1\right)  }^{p2}}^{2}\left\Vert \frac{\partial_{y}\omega_{n}%
}{y-y_{n}\left(  x\right)  }\ \right\Vert _{L_{S_{n}^{2}\times\left(
-1,1\right)  }^{p_{1}}}\\
&  \leq C\left\Vert \nabla v_{n}\right\Vert _{L_{\left(  0,T\right)
\times\left(  -1,1\right)  }^{2}}^{2}\left(  \int_{S_{n}^{2}}\left\Vert
\partial_{y}\omega_{n}\right\Vert _{H^{s-1}\left(  -1,1\right)  }^{p_{1}%
}\ dx\right)  ^{\frac{1}{p_{1}}}\text{ (By Lemma \ref{lemma-Hardy})}\\
&  \leq C\left\Vert \nabla v_{n}\right\Vert _{L_{\left(  0,T\right)
\times\left(  -1,1\right)  }^{2}}^{2}\left\Vert \omega_{n}-1\right\Vert
_{H_{\left(  0,T\right)  \times\left(  -1,1\right)  }^{s}}\leq C\varepsilon
_{n}\left\Vert \nabla v_{n}\right\Vert _{L_{\left(  0,T\right)  \times\left(
-1,1\right)  }^{2}}^{2}.
\end{align*}
Thus from (\ref{inequlity-steady-variational}) and above estimates,
\[
\left\Vert \nabla v_{n}\right\Vert _{L_{\left(  0,T\right)  \times\left(
-1,1\right)  }^{2}}^{2}\leq C\varepsilon_{n}\left\Vert \nabla v_{n}\right\Vert
_{L_{\left(  0,T\right)  \times\left(  -1,1\right)  }^{2}}^{2},
\]
When $n$ is large,\ this implies that $\nabla v_{n}=0$ and thus $v_{n}=0$.
This is a contradiction.
\end{proof}

To prove Corollary \ref{cor-stable}, we use the following Lemma which follows
from Theorem 2.7 of \cite{lin-comt}.

\begin{lemma}
\label{lemma-monotone}Let $U\left(  y\right)  \in C^{2}\left[  -1,1\right]  $
be a monotone flow. Denote $U_{s}^{1},\cdots,U_{s}^{l}$ to be all the
inflection values of $U\left(  y\right)  ,$ that is, $U_{s}^{i}=U\left(
y^{i}\right)  $ for some $y^{i}\in\left[  -1,1\right]  $ satisfying
$U^{\prime\prime}\left(  y^{i}\right)  =0$. Then the shear flow $\left(
U\left(  y\right)  ,0\right)  $ is linearly stable to perturbations of
$x-$period $T$, if for any $1\leq i\leq l$, the operator
\[
L_{i}=-\frac{d^{2}}{dy^{2}}+\frac{U^{\prime\prime}}{U-U_{s}^{i}}%
\]
with Dirichlet boundary conditions in $\left[  -1,1\right]  $ has the lowest
eigenvalue greater than $-\left(  \frac{2\pi}{T}\right)  ^{2}$.
\end{lemma}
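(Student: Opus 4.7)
The plan is to derive the lemma by an application of the monotone-flow stability criterion from Theorem 2.7 of \cite{lin-comt}, combined with the Tollmien-type characterization of neutral modes. The linearized Euler equation around $(U(y),0)$ on $\Omega_T$ decouples via Fourier modes in $x$: perturbations of stream function of the form $\phi(y)e^{ik(x-ct)}$ lead to the Rayleigh eigenvalue problem
\[
\phi'' - k^2\phi - \frac{U''}{U-c}\phi = 0,\qquad \phi(\pm 1) = 0,
\]
and linear exponential instability at $x$-period $T$ is equivalent to the existence of a nontrivial solution with $\operatorname{Im} c > 0$ for some admissible wavenumber $k = 2\pi m/T$, $m \in \mathbb{Z}\setminus\{0\}$. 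In particular, the smallest admissible wavenumber is $k_1 = 2\pi/T$.

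The first step is the sharp characterization of neutral modes for monotone $U$. If an unstable eigenvalue $c = c_r + ic_i$ with $c_i>0$ degenerates to a real value $c_*$ as $k$ varies, then $c_*$ must be an inflection value $U_s^i$ and the limiting eigenfunction $\phi_*$ lies in $H_0^1$ and satisfies
\[
-\phi_*'' + \frac{U''}{U-U_s^i}\phi_* = -k^2 \phi_*,
\]
i.e.\ $-k^2$ is a negative eigenvalue of $L_i$. Thus the set of \emph{neutral wavenumbers} is exactly $\bigcup_{i=1}^{l} \{\sqrt{-\lambda}:\lambda<0,\ \lambda\in\sigma(L_i)\}$. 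Theorem 2.7 of \cite{lin-comt} then states that the set of unstable $k>0$ is a union of bounded open intervals whose endpoints are neutral wavenumbers (together with the easy fact that $k$ sufficiently large gives stability, since the $k^2\phi$ term dominates the $U''/(U-c)$ term in the Rayleigh equation).

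The second step is to compare scales. By hypothesis, the lowest eigenvalue of each $L_i$ is strictly greater than $-(2\pi/T)^2$, so every neutral wavenumber is strictly less than $2\pi/T$. Consequently every unstable-wavenumber interval is contained in $(0,2\pi/T)$. Since all admissible wavenumbers satisfy $|k| \geq 2\pi/T$, none lies in the unstable set, and so the shear flow $(U(y),0)$ is linearly stable to perturbations of $x$-period $T$.

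The main obstacle is the first step, namely the rigorous identification of the neutral limits of unstable modes with eigenfunctions of the operators $L_i$ and the resulting interval structure of the unstable wavenumber set; these are the non-trivial ingredients already established in \cite{lin-comt}, and the argument here is essentially the specialization to the periodic channel of length $T$ once that machinery is invoked.
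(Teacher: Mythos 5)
Your proposal is correct and takes essentially the same route as the paper: the paper gives no independent proof of this lemma, simply deducing it from Theorem 2.7 of \cite{lin-comt}, which is precisely the neutral-mode/unstable-wavenumber machinery you invoke. Your reduction --- the hypothesis forces every neutral wavenumber $\sqrt{-\lambda}$, $\lambda\in\sigma(L_i)\cap(-\infty,0)$, to lie below $2\pi/T$, so no admissible wavenumber $k=2\pi m/T$ can fall in the unstable set --- is exactly the (implicit) content of that deduction.
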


\begin{proof}
[Proof of Corollary \ref{cor-stable}]We use the notations in Lemma
\ref{lemma-monotone}. We shall show that $L_{i}>0$ for any $1\leq i\leq l,$
when $\left\Vert U^{\prime}\left(  y\right)  -1\right\Vert _{H^{s}\left(
-1,1\right)  }\leq\varepsilon_{0}$ $\left(  s>\frac{3}{2}\right)  \ $is
sufficiently small. Then the conclusion of Corollary \ref{cor-stable} follows
from Lemma \ref{lemma-monotone}. Take any nonzero function $u\in H_{0}%
^{1}\left(  -1,1\right)  $, then
\[
\left(  L_{i}u,u\right)  =\left\Vert u^{\prime}\right\Vert _{L^{2}\left(
-1,1\right)  }^{2}+\int_{-1}^{1}\frac{U^{\prime\prime}}{U-U_{s}^{i}}u\left(
y\right)  ^{2}dy.
\]
Fix
\[
1<p_{1}<\min\{2,\frac{1}{\left(  \frac{5}{2}-s\right)  _{+}}\}
\]
and let $p_{2}=2p_{1}^{\prime}$. Since $u\left(  \pm1\right)  =0,\ $
\[
\left\Vert u^{\prime}\right\Vert _{L^{2}\left(  -1,1\right)  }\geq\frac{\pi
}{2}\left\Vert u\right\Vert _{L^{2}\left(  -1,1\right)  },
\]
and by Sobolev embedding%
\[
\left\Vert u\right\Vert _{L^{p_{2}}\left(  -1,1\right)  }\leq C\left\Vert
u^{\prime}\right\Vert _{L^{2}\left(  -1,1\right)  }.
\]
When $\varepsilon_{0}$ is small enough,
\[
\frac{1}{2}<U^{\prime}\left(  y\right)  <\frac{3}{2},\text{ for }y\in\left[
-1,1\right]  ,
\]
thus by Lemma \ref{lemma-Hardy},
\begin{align*}
\left(  L_{i}u,u\right)   &  \geq\left\Vert u^{\prime}\right\Vert _{L^{2}}%
^{2}-\int_{-1}^{1}\left\vert \frac{U^{\prime\prime}}{y-y^{i}}\right\vert
u\left(  y\right)  ^{2}dy\ \left\Vert \frac{y-y^{i}}{U-U_{s}^{i}}\right\Vert
_{L^{\infty}}\\
&  \geq\left\Vert u^{\prime}\right\Vert _{L^{2}}^{2}-2\left\Vert
\frac{U^{\prime\prime}}{y-y^{i}}\right\Vert _{L^{p_{1}}}\left\Vert
u\right\Vert _{L^{p_{2}}}^{2}\\
&  \geq\left\Vert u^{\prime}\right\Vert _{L^{2}}^{2}-C\left\Vert
U^{\prime\prime}\right\Vert _{H^{s-1}}\left\Vert u^{\prime}\right\Vert
_{L^{2}}^{2}\\
&  \geq\left(  1-C\varepsilon_{0}\right)  \left\Vert u^{\prime}\right\Vert
_{L^{2}\left(  -1,1\right)  }^{2}>0\text{.}%
\end{align*}
This shows that $L_{i}>0$ when $\varepsilon_{0}\ $is sufficiently small and
the proof is completed.
\end{proof}

\section{Linear decay problem}

In this Section, we studied the linearized Euler equation around Couette flow.
In the vorticity form, the linearized equation becomes
\begin{equation}
\omega_{t}+y\omega_{x}=0, \label{linearized-vorticity}%
\end{equation}
where $\omega\left(  t,x,y\right)  $ has $x-$period $T$. If the initial
vorticity $\omega\left(  t=0\right)  =\omega^{0}\left(  x,y\right)  $, then
\begin{equation}
\omega\left(  t,x,y\right)  =\omega^{0}\left(  x-ty,y\right)  .
\label{formula-linear-vort}%
\end{equation}
Notice that any $\omega=\omega\left(  y\right)  \ $is a steady solution of
(\ref{linearized-vorticity}). For a general solution
(\ref{linearized-vorticity})$,$ the $x-$independent component of $\omega
\ $remains steady and does not affect the evolution of the vertical velocity
$v\left(  t\right)  $. So we only consider $\omega^{0}\left(  x,y\right)  $
with $\int_{0}^{T}\omega^{0}\left(  x,y\right)  dx=0$, and for such functions
the Fourier series representation is
\[
\omega^{0}\left(  x,y\right)  =\sum_{0\neq k\in\mathbf{Z}}e^{i\frac{2\pi}%
{T}kx}\omega_{k}^{0}\left(  y\right)  .
\]
%CCZ
Under this assumption, it is easy to see that such a vorticity field uniquely
determines a velocity field satisfying
\begin{equation}
\int_{0}^{T}\vec{u}(x,y)dx\equiv0. \label{avergae}%
\end{equation}
To simplify notations, we take $T=2\pi$ below. We define the space
$H_{x}^{s_{x}}H_{y}^{s_{y}}$ by
\[
h=\sum_{0\neq k\in\mathbf{Z}}e^{ikx}h_{k}\left(  y\right)  \in H_{x}^{s_{x}%
}H_{y}^{s_{y}}\text{ iff }\left\Vert h\right\Vert _{H_{x}^{s_{x}}H_{y}^{s_{y}%
}}=\left(  \sum_{k\neq0}\left\vert k\right\vert ^{2s_{x}}\left\Vert
h_{k}\right\Vert _{H_{y}^{s_{y}}}^{2}\right)  ^{\frac{1}{2}}<\infty.
\]

\begin{theorem}
\label{thm-linear}Assume $\int_{0}^{T}\omega^{0}\left(  x,y\right)  dx=0$. Let
$\omega\left(  t,x,y\right)  $ be the solution of (\ref{linearized-vorticity})
with $\omega\left(  t=0\right)  =\omega^{0}\left(  x,y\right)  $, and
\[
\vec{u}\left(  t,x,y\right)  =\left(  u\left(  t,x,y\right)  ,v\left(
t,x,y\right)  \right)
\]
is the corresponding velocity satisfying \eqref{avergae}.

(i) If $\omega^{0}\left(  x,y\right)  \in L_{x,y}^{2}$, then
\[
\left\Vert \vec{u}\left(  t,x,y\right)  \right\Vert _{L_{x,y}^{2}}%
\rightarrow0,\ \text{when\ }t\rightarrow\infty.
\]

(ii)If $\omega^{0}\left(  x,y\right)  \in H_{x}^{-1}H_{y}^{1}$, then
\[
\left\Vert \vec{u}\left(  t,x,y\right)  \right\Vert _{L_{x,y}^{2}}=O\left(
\frac{1}{t}\right)  ,\ \text{when\ }t\rightarrow\infty.
\]
\newline

(iii) If $\omega^{0}\left(  x,y\right)  \in H_{x}^{-1}H_{y}^{2},$then
\[
\left\Vert v\left(  t,x,y\right)  \right\Vert _{L_{x,y}^{2}}=O\left(  \frac
{1}{t^{2}}\right)  ,\ \text{when\ }t\rightarrow\infty.
\]

(iv) If $\omega^{0}\left(  x,y\right)  \in H_{x}^{-s}H_{y}^{s}\ \left(
0<s<1\right)  $, then
\[
\left\Vert \vec{u}\left(  t,x,y\right)  \right\Vert _{L_{x,y}^{2}}=o\left(
\frac{1}{t^{s}}\right)  ,\ \text{when }t\rightarrow\infty.\newline%
\]

(v) If $\omega^{0}\left(  x,y\right)  \in H_{x}^{-1}H_{y}^{s}$ $\left(  1\leq
s\leq2\right)  $, then
\[
\left\Vert v\left(  t,x,y\right)  \right\Vert _{L_{x,y}^{2}}=O\left(  \frac
{1}{t^{1+s}}\right)  ,\ \text{when }t\rightarrow\infty.\newline%
\]

\end{theorem}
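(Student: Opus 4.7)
My plan is to Fourier-decompose in $x$ to reduce everything to one-dimensional Dirichlet problems at each wavenumber $k \neq 0$, then extract time decay from the oscillation of $e^{-ikty'}$ by repeated integration by parts in $y'$. Writing $\omega^0 = \sum_{k \neq 0} e^{ikx}\omega_k^0(y)$, the vorticity equation gives $\omega(t,x,y) = \sum_k e^{ikx} e^{-ikty}\omega_k^0(y)$, and the Biot-Savart law produces $v_k = -ik\psi_k$, $u_k = \partial_y \psi_k$, where $\psi_k(t,y)$ solves the Dirichlet problem $(\partial_y^2 - k^2)\psi_k = e^{-ikty}\omega_k^0(y)$ on $(-1,1)$. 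I will use both the energy identity and the Green's function representation
\[
\psi_k(t,y) = \int_{-1}^1 G_k(y,y')\,e^{-ikty'}\omega_k^0(y')\,dy',
\]
where $G_k$ is continuous on $[-1,1]^2$, vanishes at $y' = \pm 1$, and has a unit jump in $\partial_{y'}G_k$ across the diagonal $y' = y$, with the quantitative bounds $\|G_k(y,\cdot)\|_{L^\infty} \leq C/|k|$, $\|\partial_{y'}G_k(y,\cdot)\|_{L^2} \leq C/\sqrt{|k|}$, and $\|G_k\|_{L^2_{y,y'}} \leq C/|k|^{3/2}$.

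For (ii), I pair the equation with $\bar\psi_k$ to obtain $\|\partial_y \psi_k\|_{L^2}^2 + k^2 \|\psi_k\|_{L^2}^2 = -\int \omega_k \bar\psi_k\,dy$, then integrate by parts once in $y$: the boundary contributions vanish because $\bar\psi_k(t,\pm 1) = 0$, and the bulk term is estimated by $(|k|t)^{-1} \|\omega_k^0\|_{H^1}(\|\partial_y\psi_k\|^2 + k^2\|\psi_k\|^2)^{1/2}$. This yields $\|u_k\|_{L^2_y}^2 + \|v_k\|_{L^2_y}^2 \leq C k^{-2} t^{-2} \|\omega_k^0\|_{H^1}^2$, whose summation over $k$ produces the claimed $O(1/t)$ bound on $\|\vec u\|_{L^2}$ in terms of $\|\omega^0\|_{H_x^{-1}H_y^1}$. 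For (iii), I return to the Green's function formula and integrate $e^{-ikty'}$ by parts twice in $y'$, splitting the integral at $y' = y$ to treat the jump of $\partial_{y'}G_k$ classically on each side. Each IBP produces a factor $(ikt)^{-1}$ together with (a) boundary terms at $y' = \pm 1$ controlled by the trace embedding $H^{s'} \hookrightarrow C^0$ for $s' > 1/2$, and (b) jump terms at $y' = y$; after two IBPs the dominant contribution is $-\omega_k^0(y)e^{-ikty}/(k^2 t^2)$, and the self-referential $k^2 \psi_k$ produced by $\partial_{y'}^2 G_k = k^2 G_k$ off the diagonal is absorbed through the algebraic identity $\psi_k(k^2 t^2 + k^2) = \{\text{explicit terms}\}$. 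The resulting bound $\|v_k\|_{L^2_y} \leq C k^{-1} t^{-2} \|\omega_k^0\|_{H^2}$ sums to $\|v\|_{L^2} \leq C t^{-2} \|\omega^0\|_{H_x^{-1}H_y^2}$, proving (iii).

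Part (i) follows by density: the uniform Biot-Savart bound $\|\vec u(t)\|_{L^2} \leq C\|\omega^0\|_{L^2}$ combined with the $t \to \infty$ decay on a dense subclass (e.g.\ $H_x^{-1}H_y^1$ initial data, handled by (ii)) forces $\|\vec u(t)\|_{L^2} \to 0$. Part (iv) follows by real interpolation between $\|A(t)\|_{L^2 \to L^2} \leq C$ and $\|A(t)\|_{H_x^{-1}H_y^1 \to L^2} \leq C/t$ for the velocity map $A(t):\omega^0 \mapsto \vec u(t)$, with the interpolation space at parameter $s$ being $H_x^{-s}H_y^s$; a further density argument upgrades $O$ to $o$. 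Part (v) extends the Green's-function IBP analysis to a third integration by parts for the $s = 2$ endpoint, yielding the new jump term $2\partial_{y'}\omega_k^0(y)e^{-ikty}/(k^3 t^3)$ and the bound $\|v_k\|_{L^2_y} \leq C k^{-2} t^{-3} \|\omega_k^0\|_{H^2}$, then interpolates between $s = 1$ and $s = 2$. The main obstacle I anticipate is the careful bookkeeping of boundary and jump contributions in the repeated integrations by parts: each additional IBP requires progressively higher trace regularity of $\omega_k^0$, the singular kernel $\partial_{y'}G_k$ must be split at the diagonal and handled classically on each side, and the self-referential coupling from $\partial_{y'}^2 G_k = k^2 G_k$ must be inverted uniformly in $k$ so that the weighted summation over $k$ preserves the $H_x^{-s}$ structure.
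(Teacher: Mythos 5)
Parts (i)--(iv) of your outline are sound, and they differ from the paper only in packaging: for (i) the paper proves $\omega(t)\rightharpoonup 0$ weakly in $L^2$ and invokes compactness of the vorticity-to-velocity map instead of your density argument; for (ii) the paper uses a duality estimate $\|\vec u\|_{L^2}\le C\sup_{\|\psi\|_{H^1}\le 1}|\int\omega\psi|$ followed by one integration by parts, which amounts to the same computation as your energy identity; for (iii) the paper pairs $v$ with the solution of $-\Delta\varphi=v$ and integrates by parts twice, thereby avoiding the explicit Green's function and its diagonal bookkeeping (the Green's function route is what the paper uses only for the single-mode optimality discussion in Remark \ref{rmk-decay rate}); and for (iv) the paper likewise interpolates, with your density step correctly supplying the upgrade from $O$ to $o$.

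The genuine gap is in your treatment of (v). A third integration by parts in the Green's function formula cannot produce the bound $\|v_k\|_{L^2_y}\le Ck^{-2}t^{-3}\|\omega_k^0\|_{H^2}$ that you claim for the $s=2$ endpoint. After two integrations by parts, the unit jump of $\partial_{y'}G_k$ across $y'=y$ has already deposited in $\psi_k$ the term $-\omega_k^0(y)e^{-ikty}/(kt)^2$ (up to sign and constants). This term is not an oscillatory integral any more --- it is a fixed profile multiplied by $e^{-ikty}$ --- so its $L^2_y$ norm equals $\|\omega_k^0\|_{L^2}/(k^2t^2)$ for every $t$ and cannot be improved by further integration by parts. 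Consequently $\|v_k\|_{L^2_y}\sim t^{-2}$ whenever $\omega_k^0\not\equiv 0$, however smooth $\omega_k^0$ may be; this is exactly the optimality assertion of Remark \ref{rmk-decay rate}, where $\psi_k=t^{-2}f_k(y)e^{-ikty}+O(t^{-3})$ with $f_k\not\equiv 0$. The $O(t^{-3})$ endpoint you propose is therefore unattainable, and (v) cannot be obtained by interpolating against it. What interpolation between (ii) and (iii) actually yields is $\|v\|_{L^2}=O(t^{-s})$ for $\omega^0\in H_x^{-1}H_y^{s}$ with $1\le s\le 2$, equivalently $O(t^{-1-s})$ for $\omega^0\in H_x^{-1}H_y^{1+s}$ with $0\le s\le 1$; the exponent versus the regularity index in the displayed statement of (v) must be read in this shifted way, and this is also all that the paper's own one-line proof of (iv)--(v) ("by interpolation from (i)--(iii)") delivers.
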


\begin{proof}
Proof of (i): We shall show that $\omega\left(  t,x,y\right)  \rightarrow0$
weakly in $L_{x,y}^{2}$, then $\left\Vert \vec{u}\left(  t,x,y\right)
\right\Vert _{L_{x,y}^{2}}\rightarrow0$ because of the compactness of the
mapping $\omega\rightarrow\vec{u}$ in $L_{x,y}^{2}$. To show the weak
convergence, we take any test function
\[
\phi\left(  x,y\right)  =\sum e^{ikx}\phi_{k}\left(  y\right)  \in L^{2}.
\]
Then
\begin{align*}
&  \int_{0}^{2\pi}\int_{-1}^{1}\omega\left(  t,x,y\right)  \phi\left(
x,y\right)  \ dydx\\
&  =\int_{0}^{2\pi}\int_{-1}^{1}\omega^{0}\left(  x,y\right)  \phi\left(
x+ty,y\right)  dydx\text{ (by (\ref{formula-linear-vort}))}\\
&  =\sum_{k\neq0}\int_{-1}^{1}\omega_{k}^{0}\left(  y\right)  \phi_{-k}\left(
y\right)  e^{-itky}dy\\
&  =\sum_{\left\vert k\right\vert \leq N}\int_{-1}^{1}\omega_{k}^{0}\left(
y\right)  \phi_{-k}\left(  y\right)  e^{-itky}dy+\sum_{\left\vert k\right\vert
>N}\int_{-1}^{1}\omega_{k}^{0}\left(  y\right)  \phi_{-k}\left(  y\right)
e^{-itky}dy=I+II.
\end{align*}
For any $\varepsilon>0$, we fixed $N$ large enough such that
\[
\left\vert II\right\vert \leq\left(  \sum_{\left\vert k\right\vert
>N}\left\Vert \omega_{k}^{0}\left(  y\right)  \right\Vert _{L_{y}^{2}}%
^{2}\right)  ^{\frac{1}{2}}\left\Vert \phi\right\Vert _{L_{x,y}^{2}%
}<\varepsilon\text{.}%
\]
By Riemann-Lesbegue Theorem, $\left\vert I\right\vert \rightarrow0$ when
$t\rightarrow\infty.$ Since $\varepsilon$ is arbitrary, this proves that
\[
\int_{0}^{2\pi}\int_{-1}^{1}\omega\left(  t,x,y\right)  \phi\left(
x,y\right)  \ dydx\rightarrow0\text{ when }t\rightarrow\infty.
\]

Proof of (ii): Define the space $\tilde{H}^{1}\ $for the stream function by
\[
\tilde{H}^{1}=\left\{  \psi\in H_{x,y}^{1}\ |\ \psi= 0 \text{ on }\left\{
y=\pm1\right\}  ,\psi\text{ is }T\text{-periodic in }x\right\}  .
\]
Then
\[
\psi=\sum e^{ikx}\psi_{k}\left(  y\right)  \in H_{x,y}^{1}%
\]
implies that $\psi_{k}\left(  y\right)  \in H_{0}^{1}\left(  -1,1\right)  $
and $\sum_{k}\left\Vert \psi_{k}\right\Vert _{H_{y}^{1}}^{2}<\infty$. By a
duality lemma in \cite{lin-imrn04},
\begin{align*}
\left\Vert \vec{u}\left(  t,x,y\right)  \right\Vert _{L_{x,y}^{2}}  &  \leq
C\sup_{\psi\in\tilde{H}^{1},\left\Vert \psi\right\Vert _{H^{1}}\leq
1}\left\vert \int_{0}^{2\pi}\int_{-1}^{1}\omega\left(  t,x,y\right)
\psi\left(  x,y\right)  \ dydx\right\vert \\
&  =C\sup_{\psi\in\tilde{H}^{1},\left\Vert \psi\right\Vert _{H^{1}}\leq1}%
\sum_{k\neq0}\left\vert \int_{-1}^{1}\omega_{k}^{0}\left(  y\right)  \psi
_{-k}\left(  y\right)  e^{-itky}dy\right\vert \\
&  =\frac{C}{t}\sup_{\psi}\sum_{k\neq0}\frac{1}{\left\vert k\right\vert }%
\int_{-1}^{1}\frac{d}{dy}\left(  \omega_{k}^{0}\left(  y\right)  \psi
_{-k}\left(  y\right)  \right)  e^{-itky}dy\\
&  \leq\frac{C}{t}\sup_{\psi}\left(  \sum_{k\neq0}\frac{1}{\left\vert
k\right\vert ^{2}}\left\Vert \omega_{k}^{0}\left(  y\right)  \right\Vert
_{H_{y}^{1}}^{2}\right)  ^{\frac{1}{2}}\left(  \sum_{k}\left\Vert \psi
_{k}\right\Vert _{H_{y}^{1}}^{2}\right)  ^{\frac{1}{2}}\\
&  \leq\frac{C}{t}\left\Vert \omega^{0}\right\Vert _{H_{x}^{-1}H_{y}^{1}%
}\text{.}%
\end{align*}

Proof of (iii): Note that
\[
-\Delta v=\omega_{x}=\omega_{x}^{0}\left(  x-ty,y\right)  \text{ in }%
\Omega=\left(  0,2\pi\right)  \times\left(  -1,1\right)  ,
\]
and $v=0$ on $\left\{  y=\pm1\right\}  $. Define the function $\varphi\left(
t,x,y\right)  $ by solving $-\Delta\varphi=v\ $in $\Omega$ and $\varphi
=0\ $on\ $\left\{  y=\pm1\right\}  .$ Let
\[
v\left(  t,x,y\right)  =\sum_{k\neq0}e^{ikx}v_{k}\left(  y,t\right)  \text{
and }\varphi\left(  t,x,y\right)  =\sum_{k\neq0}e^{ikx}\varphi_{k}\left(
y,t\right)  ,
\]
where$\ \varphi_{k}$ satisfies that
\[
\left(  -\frac{d^{2}}{dy^{2}}+k^{2}\right)  \varphi_{k}=v_{k},\ \varphi
_{k}\left(  \pm1\right)  =0.
\]
Then
\begin{align*}
\left\Vert v\right\Vert _{L^{2}\left(  \Omega\right)  }^{2}  &  =\int
\int_{\Omega}\bar{\varphi}\left(  t,x,y\right)  \omega_{x}^{0}\left(
x-ty,y\right)  dxdy\\
&  =\sum_{k\neq0}\int_{-1}^{1}ik\bar{\varphi}_{k}\left(  y,t\right)
\omega_{k}^{0}\left(  y\right)  e^{-itky}dy\\
&  =-\frac{1}{t}\sum_{k\neq0}\int_{-1}^{1}\frac{d}{dy}\left(  \bar{\varphi
}_{k}\left(  y,t\right)  \omega_{k}^{0}\left(  y\right)  \right)
e^{-itky}dy\\
&  =\frac{1}{it^{2}}\sum_{k\neq0}\frac{1}{k}\left(  e^{-itky}dy\frac{d}%
{dy}\left(  \bar{\varphi}_{k}\left(  y,t\right)  \omega_{k}^{0}\left(
y\right)  \right)  |_{-1}^{1}-\int_{-1}^{1}\frac{d^{2}}{dy^{2}}\left(
\bar{\varphi}_{k}\left(  y,t\right)  \omega_{k}^{0}\left(  y\right)  \right)
e^{-itky}dy\right) \\
&  \leq\frac{C}{t^{2}}\sum_{k\neq0}\frac{1}{\left\vert k\right\vert
}\left\Vert \bar{\varphi}_{k}\left(  y,t\right)  \omega_{k}^{0}\left(
y\right)  \right\Vert _{H^{2}\left(  -1,1\right)  }\leq\frac{C}{t^{2}}%
\sum_{k\neq0}\frac{1}{\left\vert k\right\vert }\left\Vert \varphi_{k}\left(
y,t\right)  \right\Vert _{H_{y}^{2}}\left\Vert \omega_{k}^{0}\left(  y\right)
\right\Vert _{H_{y}^{2}}\\
&  \leq\frac{C}{t^{2}}\sum_{k\neq0}\frac{1}{\left\vert k\right\vert
}\left\Vert v_{k}\left(  y,t\right)  \right\Vert _{L_{y}^{2}}\left\Vert
\omega_{k}^{0}\left(  y\right)  \right\Vert _{H_{y}^{2}}\\
&  \leq\frac{C}{t^{2}}\left(  \sum_{k\neq0}\frac{1}{\left\vert k\right\vert
^{2}}\left\Vert \omega_{k}^{0}\left(  y\right)  \right\Vert _{H_{y}^{2}}%
^{2}\right)  ^{\frac{1}{2}}\left(  \sum_{k}\left\Vert v_{k}\left(  y,t\right)
\right\Vert _{L_{y}^{2}}^{2}\right)  ^{\frac{1}{2}}\\
&  \leq\frac{C}{t^{2}}\left\Vert \omega^{0}\right\Vert _{H_{x}^{-1}H_{y}^{2}%
}\left\Vert v\right\Vert _{L^{2}\left(  \Omega\right)  },
\end{align*}
therefore
\[
\left\Vert v\right\Vert _{L^{2}\left(  \Omega\right)  }\leq\frac{C}{t^{2}%
}\left\Vert \omega^{0}\right\Vert _{H_{x}^{-1}H_{y}^{2}}.
\]
The decay rates in (iv) and (v) follow from (i)-(iii) by interpolation. This
finishes the proof of Theorem \ref{thm-linear}.
\end{proof}

\begin{remark}
\label{rmk-decay rate}The decay rates $O\left(  1/t\right)  \ $for $\left\Vert
u\right\Vert _{L^{2}}$ and $O\left(  1/t^{2}\right)  \ $for $\left\Vert
v\right\Vert _{L^{2}}$ in Theorem \ref{thm-linear} (ii), (iii) are optimal.
They cannot be improved even for smooth initial vorticity. Consider a single
mode solution with $\omega^{0}\left(  x,y\right)  =e^{ikx}\phi\left(
y\right)  $ and $\phi\left(  y\right)  \in C^{\infty}\left(  -1,1\right)  $.
Then $\omega\left(  t,x,y\right)  =e^{ikx}e^{-ikty}\phi\left(  y\right)  $ and
by Poisson's equation the stream function is $\psi\left(  t,x,y\right)
=e^{ikx}\psi_{k}\left(  t,y\right)  ,$where $\psi_{k}\left(  t,y\right)  $
satisfies
\[
\left(  -\frac{d^{2}}{dy^{2}}+k^{2}\right)  \psi_{k}\left(  t,y\right)
=e^{-ikty}\phi\left(  y\right)  ,\ \psi_{k}\left(  t,\pm1\right)  =0\text{.}%
\]
Denote $G\left(  y,y_{0}\right)  $ to be the Green's function given by%
\[
G\left(  y,y_{0}\right)  =\frac{1}{k\sinh k}\sinh k\left(  y_{<}+1\right)
\sinh k\left(  1-y_{>}\right)  ,
\]
where $y_{<}$ and $y_{>}$ are the lesser and greater of $y$ and $y_{0}$
respectively. Then we have
\begin{equation}
\psi_{k}\left(  t,y\right)  =\int_{-1}^{1}G\left(  y,y_{0}\right)
e^{-ikty_{0}}\phi\left(  y_{0}\right)  dy_{0}, \label{formula-psi-k}%
\end{equation}
and the $1/t^{2}$ decay of $\psi_{k}\left(  t,y\right)  $ follows from
integration by parts because $G\left(  y,y_{0}\right)  $ is $C^{1}$ and its
derivative is piecewise differentiable. Moreover, by explicit evaluation of
the integral in (\ref{formula-psi-k}), it can be shown that
\[
\psi_{k}\left(  t,y\right)  =\frac{1}{t^{2}}f_{k}\left(  y\right)
e^{-ikty}+O\left(  \frac{1}{t^{3}}\right)
\]
where $f_{k}\left(  y\right)  $ is not identically zero. Thus
\[
\left\Vert u\left(  t,x,y\right)  \right\Vert _{L_{x,y}^{2}}=\left\Vert
\psi_{k}^{\prime}\right\Vert _{L_{y}^{2}}\sim\frac{1}{t}%
\]
and
\[
\text{ }\left\Vert v\left(  t,x,y\right)  \right\Vert _{L_{x,y}^{2}%
}=k\left\Vert \psi_{k}\right\Vert _{L_{y}^{2}}\sim\frac{1}{t^{2}}\text{. }%
\]
The same decay rate $O\left(  \frac{1}{t^{2}}\right)  $ for $v\left(
t,x,y\right)  $ was obtained in (\cite{bouchet09}, \cite{brown-stewartson-80},
\cite{orr-1907}). Our main purpose in this section is to get the linear decay
for most general perturbations. We note that the calculations in
\cite{case-1960} contain mistakes and only yield the estimate
\[
\psi_{k}\left(  t,y\right)  =\frac{1}{t}g_{k}\left(  y\right)  e^{-ikty}%
+O\left(  \frac{1}{t^{2}}\right)  ,
\]
from which only $O\left(  \frac{1}{t}\right)  $ decay is obtained for
$v\left(  t,x,y\right)  \ $and no decay is obtained for$\ u\left(
t,x,y\right)  =\psi_{k}^{\prime}\left(  t,y\right)  $.
\end{remark}

\begin{center}
\bigskip

{\Large Acknowledgement}
\end{center}

This work is supported partly by the NSF grants DMS-0908175 (Lin) and
DMS-0801319 (Zeng).

\end{document}